\definecolor{mygray}{gray}{0.6}
\newcommand*\bigcdot{\mathpalette\bigcdot@{.5}}
\newcommand*\bigcdot@[2]{\mathbin{\vcenter{\hbox{\scalebox{#2}{$\m@th#1\bullet$}}}}}
\titleclass{\subsubsubsection}{straight}[\subsection]
\newcounter{subsubsubsection}[subsubsection]
\renewcommand\thesubsubsubsection{\thesubsubsection.\arabic{subsubsubsection}}
\renewcommand\theparagraph{\thesubsubsubsection.\arabic{paragraph}} 
\newcommand{\RN}[1]{%
  \textup{\uppercase\expandafter{\romannumeral#1}}%
}
\titleformat{\section}{\normalfont\Large\bfseries}{\S\thesection}{1em}{}[]
\newtheorem{Theorem}{Theorem}[section]
\newtheorem{Lemma}[Theorem]{Lemma}
\newtheorem{Remark}[Theorem]{Remark}
\def\bF{\mathbb{F}}
\def\Fq{\mathbb{F}_q}
\def\GL{\mathrm{GL}}
\def\PG{\mathrm{PG}}
\def\PGL{\mathrm{PGL}}
\definecolor{amber(sae/ece)}{rgb}{1.0, 0.49, 0.0}
	\definecolor{darkcyan}{rgb}{0.0, 0.55, 0.55}
	\definecolor{darkseagreen}{rgb}{0.56, 0.74, 0.56}
	\definecolor{salmon}{rgb}{1.0, 0.55, 0.41}
\definecolor{ForestGreen}{RGB}{34,139,34}
\definecolor{carminepink}{rgb}{0.92, 0.3, 0.26}
	\definecolor{electricyellow}{rgb}{1.0, 1.0, 0.0}
	\definecolor{chromeyellow}{rgb}{1.0, 0.65, 0.0}
	\definecolor{babyblue}{rgb}{0.54, 0.81, 0.94}
 \def\bF{\mathbb{F}}
\def\Fq{\mathbb{F}_q}
\def\GL{\mathrm{GL}}
\def\F2h{\mathbb{F}_{2^h}}
\def\PG{\mathrm{PG}}
\def\PGL{\mathrm{PGL}}
\def\cP{\mathcal{P}}
\def\cV{\mathcal{V}}
\def\cZ{\mathcal{Z}}
\def\cC{\mathcal{C}}
\def\cH{\mathcal{H}}
\def\cN{\mathcal{N}}
\title{Nets of conics containing a double line in $\PG(2,q)$, $q$ even}
\author{Nour Alnajjarine\footnote{University  of Rijeka (\texttt{nour.alnajjarine@math.uniri.hr})} \;  and \;
Michel Lavrauw\footnote{University of Primorska; Vrije Universiteit Brussel (\texttt{michel.lavrauw@famnit.upr.si})}}
\begin{document} 
\maketitle
\begin{abstract}

 This paper completes the classification of nets of conics containing at least one double line in $\PG(2,q)$ for $q$ even. This classification contributes to the classification of partially symmetric tensors in $\mathbb{F}_q^3 \otimes S^2 \mathbb{F}_q^3$, $q$ even.
The proof is obtained using geometric and combinatorial properties of the Veronese surface in 5-dimensional projective space over the finite field of even order.
In particular, the orbits of planes in $\PG(5,q)$ that intersect the nucleus plane of the Veronese surface in at least one point are classified. 
As a result, it is shown that there are exactly $18$ equivalence classes of nets in $\PG(2,q)$, $q$ even, containing at least one double line, $9$ of which have an empty base. 
\end{abstract}

\textbf{Keywords:} 
Nets of Conics,\hspace{0.2cm}  Double Lines, \hspace{0.2cm} Nucleus Plane,\hspace{0.2cm} Veronese Surface\hspace{0.2cm}

\section{Introduction}

A linear system of conics is defined as a subspace of the projective space of quadratic forms in $\bF[X,Y,Z]$. The classification of linear systems of conics over finite fields was first addressed by Dickson in \cite{dickson}, who classified pencils of conics in $\mathrm{PG}(2, q)$ for $q$ odd using a purely algebraic approach. Following a similar approach, Campbell and Wilson partially classified pencils of conics over finite fields of even characteristic, and nets of conics in \cite{campbell,campbell2,wilson}. 

In 2020, a new framework for classifying these linear systems was introduced in \cite{lines,nets}, combining algebraic methods with geometric and combinatorial insights, which has since led to significant progress.
For the classification and characterization of pencils, webs, and squabs of conics, and non-empty base nets of conics in $\mathrm{PG}(2, q)$, as well as an explanation of some of the shortcomings in Campbell's and Wilson's partial classifications, we refer the reader to \cite{lines, solidsqeven, nets, planesqeven, webs}. We also refer to \cite{AbEmIa}, which examines nets over algebraically closed fields of characteristic zero in connection with Artinian algebras of length 7, and provides, in Appendices A and B, an interesting historical overview of study of nets of conics. For the classification of nets of conics over $\mathbb{R}$, we refer to the work of Wall \cite{wall}.

The approach in \cite{lines,nets} lifts the problem of classifying linear systems of conics to a higher-dimensional framework, where the properties of the Veronese surface $\mathcal{V}(\mathbb{F}_q)$ in $\mathrm{PG}(5, q)$ are used to distinguish projectively inequivalent linear systems using a combination of geometric and combinatorial invariants, such as e.g. the {\it orbit distributions} (see Section \ref{pre}). For further insight into the geometric and algebraic properties of Veronese varieties over fields of non-zero characteristic, we refer the reader to \cite{Havlicek}.

For $q$ odd, there exists a polarity of $\PG(5,q)$ that maps the set of conic planes of $\mathcal{V}(\Fq)$ onto the set of tangent planes of $\mathcal{V}(\Fq)$ (see, e.g., \cite[Theorem 4.25]{galois geometry}). This induces a correspondence between nets of conics in $\PG(2,q)$ containing at least one double line and planes in $\PG(5,q)$ that meet $\cV(\Fq)$ in at least one point, for $q$ odd. However, this correspondence does not hold over finite fields of characteristic $2$, as shown in \cite[Section 4]{planesqeven}. In light of this correspondence, nets in $\PG(2,q)$, $q$ odd, containing at least one double line were referred to as \textit{rank-one nets}. Rank-one nets of conics in $\PG(2,q)$, with $q$ odd, were classified in \cite{nets}, resulting in $15$ orbits under $\PGL(3,q)$. 

In this paper, we complete the classification of nets of conics containing at least one double line in $\PG(2,q)$, $q$ even, by classifying $K$-obits of planes in $\PG(5,q)$ intersecting the \textit{nucleus plane} non-trivially, where $K$ is the subgroup in $\PGL(6,q)$ stabilizing the Veronese surface $\mathcal{V}(\mathbb{F}_q)$.  Moreover, for each of the orbits we determine the so-called point-orbit distributions, which are combinatorial invariants for these orbits.

\begin{Theorem}\label{mainresult}

There are $18$ $\PGL(3,q)$-orbits of nets of conics in $\PG(2,q)$ containing at least one double line for $q$ even.
Representatives and point-orbit distributions of the corresponding orbits of planes in $\PG(5,q)$ are as listed in Table \ref{tableplanesintersectingpiN}.
\end{Theorem}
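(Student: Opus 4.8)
The plan is to recast the statement entirely in terms of planes of $\PG(5,q)$ and then to run a stratified orbit analysis under $K$. First I would use the reformulation on which the theorem is built: a net of conics contains at least one double line exactly when the associated plane $\Pi\subset\PG(5,q)$ meets the nucleus plane $\pi_N$ of $\cV(\Fq)$ non-trivially, and two such nets are $\PGL(3,q)$-equivalent iff the planes are $K$-equivalent, where $K$ is the stabiliser of $\cV(\Fq)$ in $\PGL(6,q)$. This turns Theorem~\ref{mainresult} into the problem of producing a complete irredundant list of $K$-orbits of planes $\Pi$ with $\Pi\cap\pi_N\neq\emptyset$, each tagged with its point-orbit distribution.

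The first invariant is $d:=\dim(\Pi\cap\pi_N)\in\{0,1,2\}$, which is $K$-invariant and splits the analysis into three strata; the positive-dimensional strata $d=1,2$ are special to the characteristic-two setting, where $\pi_N$ is a genuine plane, and are a natural place to look for orbits beyond the $15$ of the odd-order case. The stratum $d=2$ is the single plane $\Pi=\pi_N$. For $d=1$ and $d=0$ I would use that $K$ acts on $\pi_N\cong\PG(2,q)$ as $\PGL(3,q)$ (up to a duality or Frobenius twist), hence transitively on the points and on the lines of $\pi_N$: in the stratum $d=1$ I fix once and for all the intersection line $m=\Pi\cap\pi_N$ and classify the planes $\Pi\supset m$ with $\Pi\cap\pi_N=m$ up to the stabiliser $K_m$; in the stratum $d=0$ I fix the point $P_0=\Pi\cap\pi_N$ and classify the planes through $P_0$ meeting $\pi_N$ only in $P_0$ up to $K_{P_0}$. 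In each stratum the residual freedom is the position of $\Pi$ with respect to $\cV(\Fq)$ and its tangent and secant planes, so the enumeration is governed by the $K_m$- (resp.\ $K_{P_0}$-) orbits of the relevant lines and points of a fixed complement; working through these produces the candidate representatives of Table~\ref{tableplanesintersectingpiN}.

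To show the list is irredundant I would compute, for each representative, its point-orbit distribution, namely the vector recording how many of its points lie in each $K$-orbit of points of $\PG(5,q)$, these point-orbits (in particular those on $\cV(\Fq)$ and on $\pi_N$) being available from the preparatory material. As the distribution is a $K$-invariant, representatives with distinct distributions lie in distinct orbits; whenever two representatives share a distribution I would separate them by a finer invariant, for example whether the net has empty or non-empty base (this also accounts for the split $9+9$ in the final count) or a line-orbit distribution. Completeness then follows from checking that the case analysis in the strata $d=0,1$ is exhaustive, i.e.\ that every admissible choice of the complementary data lands in one of the enumerated $K_m$- or $K_{P_0}$-orbits.

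I expect the main obstacle to be the stratum $d=0$. There $K_{P_0}$ is comparatively small, so $\Pi$ can meet $\cV(\Fq)$, its tangent planes and its secants in many inequivalent ways, and the point-orbit distributions are the most likely to collide across genuinely inequivalent planes; resolving these collisions will require the finer invariants above and, in a handful of cases, an explicit element of $K_{P_0}$ exhibiting or obstructing an equivalence. A secondary difficulty, underlying every stratum, is pinning down the exact orbit decomposition of $K_{P_0}$ and $K_m$ on the pencils and planes through the fixed subspace, together with the precise point-orbit distributions, both of which rest on the combinatorics of $\cV(\Fq)$ in even characteristic.
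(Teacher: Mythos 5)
Your reduction to $K$-orbits of planes meeting $\pi_{\mathcal{N}}$ --- identifying double lines of the net with hyperplanes of $\cH_1$ through the plane and invoking $h_1(\pi)=r_{2,n}(\pi)$ --- is exactly the paper's starting point, and your stratification by $\dim(\Pi\cap\pi_{\mathcal{N}})$ matches how the paper organises the genuinely new cases. The structural difference is at the top level: the paper does not run the $d$-stratification over all planes meeting $\pi_{\mathcal{N}}$, but first splits off the planes that also meet $\cV(\Fq)$ --- these account for $9$ of the $18$ orbits and are imported wholesale from the earlier classification of planes meeting the Veronese surface --- and only stratifies the remaining, base-empty planes by $\dim(\pi\cap\pi_{\mathcal{N}})$ ($=2$: one orbit; $=1$: two orbits, Theorem \ref{thm:line_in_nucleus_plane}; $=0$: six orbits, Theorem \ref{thm:point_in_nucleus_plane}). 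Within the $d=0$ stratum the paper's engine is not a direct analysis of $K_{P_0}$ acting on a complement, but the known $K$-orbits of \emph{lines}: each candidate plane is written as $\langle P,\ell\rangle$ with $\ell$ forced into a short list of line types ($o_{13,1}$, $o_{14,1}$, $o_{15,1}$, $o_{17}$, \dots), and the further $K$-invariant dichotomy $H(P)=H(\pi)$ versus $H(P)\neq H(\pi)$ (comparing the unique $\cH_1$-hyperplane determined by the point $P$ with the one determined by the plane) splits the stratum into Theorems \ref{thm:same_hyperplanes} and \ref{thm:different_hyperplanes}. Your plan would need a substitute for both of these inputs; without the line classification in particular, the $d=0$ case is a substantial open-ended computation.

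One concrete weakness: your proposed tie-breakers for representatives sharing a point-orbit distribution do not suffice as stated. The collisions that actually occur ($\Sigma_{20}$ vs.\ $\Sigma_{22}$ and $\Sigma_{21}$ vs.\ $\Sigma_{23}$, both base-empty, and $\Sigma_3/\Sigma_4$, $\Sigma_9/\Sigma_{10}$, $\Sigma_{11}/\Sigma_{15}$ among the non-empty-base orbits) are all between planes with the same number of rank-one points, so ``empty vs.\ non-empty base'' separates none of them. The paper separates the new pairs by the invariant $H(P)=H(\pi)$ vs.\ $H(P)\neq H(\pi)$, equivalently by the projective type of the cubic $\pi\cap\cV^{(2)}(\Fq)$ (Remark \ref{mainremark}); your suggested line-orbit distribution would very plausibly also work, but it is an invariant that still has to be computed for each representative. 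This is a fixable but genuine hole in the irredundancy part of your argument.
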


Our result contributes to the classification of partially symmetric tensors in $\mathbb{F}_q^3 \otimes S^2 \mathbb{F}_q^3$ for even $q$. In this context, classifying tensors in $\mathbb{F}_q^3 \otimes S^2 \mathbb{F}_q^3$ is equivalent to determining the $K$-orbits of points, lines, and planes in $\mathrm{PG}(5,q)$. Although a complete classification of tensors in $\Fq^3 \otimes \Fq^3 \otimes \Fq^3$ is currently beyond reach, progress has been achieved by starting with the study of partially symmetric representations of tensors in $\Fq^3 \otimes \Fq^3 \otimes \Fq^3$. For further details on this classification problem and on the geometry associated with tensors in $\Fq^r \otimes \Fq^3 \otimes \Fq^3$, $r \leq 6$, we refer the reader to \cite{canonical,lines,planesqeven,solidsqeven}. We also refer the reader to the package {\it T233}, implemented in GAP \cite{NourMichel,T233,GAP,fining}, which employs the geometry of the associated contraction spaces to determine the ranks and orbits of tensors in $\Fq^2 \otimes \Fq^3 \otimes \Fq^3$.

The structure of the paper is as follows. In Section \ref{pre}, we present the necessary definitions and theoretical background for our main result. The proof of Theorem \ref{mainresult} is divided into two parts: Section \ref{main1} addresses nets with at least one double line and a non-empty base, while Section \ref{main2} considers nets with at least one double line and an empty base, where the {\it base} of a net of conics is defined as the intersection points of the conics contained in the net. 
In both cases, the approach is using the equivalent setting in the associated Veronese space $\PG(5,q)$.
Finally, in Section \ref{comparison}, we compare our classification with the partial classification of nets as presented in \cite{campbell2}.

\section{Preliminaries}\label{pre}

In this section, we present the necessary definitions and theoretical background for our proofs. Some of the results are classical and can be found in \cite{berlekamp,galois geometry,hirsch}, while others are more recent contributions from \cite{webs,solidsqeven,planesqeven,roots,lines,nets,LaPoSh2021}. Throughout this work, the finite field of order $q$ is denoted by $\mathbb{F}_q$, the $n$-dimensional projective space over $\mathbb{F}_q$ is denoted by $\mathrm{PG}(n, q)$ and the indeterminates of the coordinate rings of $\PG(2,q)$ and $\PG(5,q)$ are represented by $(X,Y,Z)$ and $(Y_0,\ldots,Y_5)$, respectively. The trace map from $\mathbb{F}_q$ to its prime subfield $\mathbb{F}_p$ is denoted by $\operatorname{Tr}$. For solutions to quadratic and cubic equations over finite fields of even characteristic, see \cite[Section~2.1]{planesqeven}.

A {\it conic} $\mathcal{C}$ in $\mathrm{PG}(2, q)$ is defined as the zero locus $\mathcal{Z}(f)$ of a quadratic form $f$ on $\mathrm{PG}(2, q)$. Up to projective equivalence, conics in $\mathrm{PG}(2, q)$ fall into four orbits: double lines, pairs of distinct (real) lines, pairs of conjugate (imaginary) lines defined over the quadratic extension $\mathbb{F}_{q^2}$, and non-singular conics.

A non-trivial subspace of the projective space of quadratic forms in $\mathbb{F}_q[X, Y, Z]$ has dimension $1 \leq s \leq 4$, and is called a pencil, net, web, or squab of conics when $s = 1, 2, 3$, or $4$, respectively. We denote by $\mathcal{N} = \langle \mathcal{C}_1, \mathcal{C}_2, \mathcal{C}_3 \rangle$ a net of conics generated by three conics $\mathcal{C}_i$ that do not lie in a pencil of conics. The {\it base} of a net of conics $\mathcal{N}$ is defined as the set of points in the intersection $\mathcal{C}_1 \cap \mathcal{C}_2 \cap \mathcal{C}_3$. 

\subsection{The Veronese surface $\cV(\Fq)$ in $\PG(5,q)$}\label{egSolid}

The {\it Veronese surface} $\cV(\bF_q)$ is a 2-dimensional algebraic variety in $\PG(5,q)$ defined as the image of the {\it Veronese embedding}
\[
\nu: \PG(2,q)\rightarrow \PG(5,q) \quad \text{given by} \quad
(u_0,u_1,u_2) \mapsto (u_0^2,u_0u_1,u_0u_2,u_1^2,u_1u_2, u_2^2).
\]

 A point $P = (y_0, y_1, y_2, y_3, y_4, y_5)$ in $\mathrm{PG}(5, q)$ can be represented by the symmetric $3 \times 3$ matrix
\[
M_P = \begin{bmatrix} 
y_0 & y_1 & y_2 \\
y_1 & y_3 & y_4 \\
y_2 & y_4 & y_5  
\end{bmatrix}.
\]
This matrix representation naturally extends to subspaces of $\mathrm{PG}(5, q)$. For instance, the plane $\pi$ spanned by the first three points of the standard frame in $\mathrm{PG}(5, q)$ is given by
\[
M_\pi = \begin{bmatrix} 
x & y & z \\
y & \cdot & \cdot \\
z & \cdot & \cdot 
\end{bmatrix},
\]
where each “$\cdot$” denotes a zero entry.\\

The {\it rank} of a point $P \in \PG(5, q)$ is defined as the rank of its associated symmetric matrix $M_P$. Points of rank one correspond to points of the Veronese surface $\mathcal{V}(\Fq)$. The {\it secant variety} $\mathcal{V}^{(2)}(\Fq)$ consists of all points of rank at most two in $\PG(5, q)$. Points in $\pi\cap\mathcal{V}^{(2)}(\Fq)$, where $\pi$ is a plane in $\PG(5,q)$, correspond to the $\bF_q$-rational points of the cubic curve  $\mathscr{C}(\pi)$ in $\PG(2,q)$ defined by setting the determinant of the matrix representation of $\pi$ to zero. The surface $\mathcal{V}(\Fq)$ contains $q^2 + q + 1$ {\it conics}, which are the images under the Veronese embedding $\nu$ of lines in $\PG(2, q)$ \cite[Remark 2.1]{webs}. For any two points $P, Q \in \mathcal{V}(\Fq)$, there exists a unique conic through them, defined as
\[
\mathcal{C}(P, Q) := \nu\left(\langle \nu^{-1}(P), \nu^{-1}(Q) \rangle\right),
\]
with any two such conics intersecting in exactly one point. Each conic in $\PG(2, q)$ corresponds to a hyperplane section of $\mathcal{V}(\Fq)$ via the map
\begin{eqnarray}\label{eqn:delta}
\delta: \cZ\left(\sum_{0 \leq i \leq j \leq 2} a_{ij} X_i X_j\right) \mapsto \mathcal{Z}(a_{00}Y_0 + a_{01}Y_1 + a_{02}Y_2 + a_{11}Y_3 + a_{12}Y_4 + a_{22}Y_5).
\end{eqnarray}

A {\it conic plane} in $\PG(5, q)$ is a plane intersecting $\mathcal{V}(\Fq)$ in a conic. For $q$ even, all tangent lines to a conic $\mathcal{C} \subset \mathcal{V}(\Fq)$ are concurrent at its {\it nucleus}. The set of nuclei of all such conics lies in a plane $\pi_{\mathcal{N}} = \mathcal{Z}(Y_0, Y_3, Y_5)$, known as the {\it nucleus plane}. Every point of rank two in $\PG(5, q)$ lies in a unique conic plane $\langle \mathcal{C}(R) \rangle$ containing a conic of $\mathcal{V}(\Fq)$. \\

\subsection{Correspondence and combinatorial invariants}

There exists a one-to-one correspondence between the $\PGL(3, q)$-orbits of linear systems of conics in $\PG(2, q)$ and the $K$-orbits of subspaces in $\PG(5, q)$, where $K$ is a subgroup of $\PGL(6, q)$ isomorphic to $\PGL(3, q)$. The group $K$ arises as the lift of $\PGL(3, q)$ via the Veronese embedding $\nu$. Specifically, $K$ is defined as $K := \alpha(\PGL(3, q))$, where $\alpha: \PGL(3, q) \rightarrow \PGL(6, q)$ is a group monomorphism defined as follows.
For a projectivity $\phi_A \in \PGL(3, q)$ defined by the matrix $A \in \GL(3, q)$, the action of the associated projectivity $\alpha(\phi_A) \in \PGL(6, q)$ on points of $\PG(5, q)$ is defined as
\[
\alpha(\phi_A): P \mapsto Q \quad \text{where} \quad M_Q = A M_P A^T,
\]
where $M_P$ and $M_Q$ are the matrices corresponding to the points $P$ and $Q$. The group $K$ preserves the Veronese surface $\mathcal{V}(\mathbb{F}_q)$ in $\PG(5, q)$. When $q > 2$, $K$ coincides with the full setwise stabiliser of $\mathcal{V}(\mathbb{F}_q)$ in $\PGL(6, q)$. In contrast, for $q = 2$, the full setwise stabiliser of $\mathcal{V}(\mathbb{F}_2)$ is strictly larger and is isomorphic to the symmetric group $\operatorname{Sym}_7$.\\

Specifically, the $K$-orbits of points, lines, planes and solids in $\PG(5,q)$ correspond to the $\PGL(3,q)$-orbits of squabs, webs, nets and pencils of conics in $\PG(2,q)$. \\

There are four $\PGL(3,q)$-orbits of squabs of conics in $\PG(2,q)$ for $q$ even, corresponding to the $K$-orbits of points in $\PG(5,q)$, defined as follows:
\begin{itemize}
    \item $\mathcal{P}_1$: the set of rank-$1$ points, consisting of the $q^2 + q + 1$ points in $\cV(\Fq)$;
    \item $\mathcal{P}_3$: the set of rank-$3$ points, with cardinality $q^5 - q^2$;
    \item $\mathcal{P}_{2,n}$: the $q^2 + q + 1$ rank-$2$ points lying in the nucleus plane $\pi_{\mathcal{N}}$;
    \item $\mathcal{P}_{2,s}$: the $(q^2 - 1)(q^2 + q + 1)$ rank-$2$ points in conic planes not contained in $\pi_{\mathcal{N}} \cup \cV(\mathbb{F}_q)$.
\end{itemize}

The point-orbit distribution of a subspace $W \subset \PG(5,q)$ with $q$ even is given by:
\[
OD_0(W) = [r_1, r_{2,n}, r_{2,s}, r_3],
\]
where $r_1 = r_1(W)$ and $r_3 = r_3(W)$ count the number of rank-$1$ and rank-$3$ points in $W$, respectively; $r_{2,n} = r_{2,n}(W)$ counts rank-$2$ points in $W \cap \pi_{\mathcal{N}}$; and $r_{2,s} = r_{2,s}(W)$ counts rank-$2$ points in $W \setminus (\pi_{\mathcal{N}} \cup \cV(\Fq))$. Note that in $\PG(5,2)$, the point-orbit distribution is not preserved under the action of $\operatorname{Sym}_7$, the full setwise stabiliser of $\cV(\mathbb{F}_2)$, since the nucleus plane $\pi_{\mathcal{N}}$ is not stabilised by $\operatorname{Sym}_7$.\\

The $K$-orbits of lines in $\PG(5,q)$ were classified in \cite{lines}, and the $K$-orbits of solids for even $q$ were determined in \cite{solidsqeven}. The classification of planes intersecting $\cV(\Fq)$ for even $q$ was completed in \cite{planesqeven}. Collectively, these results led to the classification of pencils, webs, and non-empty base nets of conics in $\PG(2,q)$; $q$ even. Note that each rank-$1$ point in a plane $\pi$ in $\PG(5,q)$ corresponds, via the Veronese embedding, to a base point of the associated net of conics \cite[Lemma 4.1]{planesqeven}. 
Hyperplanes in $\PG(5,q)$ are associated with conics in $\PG(2,q)$ through the map $\delta$ defined in \eqref{eqn:delta}. There are four $K$-orbits of hyperplanes, corresponding to the distinct $\PGL(3,q)$-orbits of conics:
\begin{itemize}
    \item $\cH_1$: the set of hyperplanes corresponding to double lines; these hyperplanes intersect $\cV(\Fq)$ in a conic;
    \item $\cH_{2,r}$: the set of hyperplanes corresponding to pairs of real lines; these hyperplanes intersect $\cV(\Fq)$ in two conics;
    \item $\cH_{2,i}$: the set of hyperplanes corresponding to conjugate pairs of imaginary lines; these intersect $\cV(\Fq)$ in a single point;
    \item $\cH_3$: the set of hyperplanes corresponding to non-singular conics; these intersect $\cV(\Fq)$ in a normal rational curve of degree $4$.
\end{itemize}

The hyperplane-orbit distribution of a subspace $W \subset \PG(5,q)$ is given by:
\[
OD_4(W) = [h_1, h_{2,r}, h_{2,i}, h_3],
\]
where each $h_i = h_i(W)$ counts the number of hyperplanes from the corresponding $K$-orbit that are incident with $W$. This distribution is often referred to as the {\it conic distribution} of the linear system of conics associated with $W$. Similarly, one can define the {\it line-, plane-}, and {\it solid-orbit distributions} of a subspace $W \subset \PG(5,q)$. These distributions remain invariant under the action of $K$ and play a crucial role in identifying the $K$-orbit of $W$.\\

We conclude this section by presenting the following result from \cite{CSRD codes}, which specifies how the number of hyperplanes in $\mathcal{H}_1$ containing a plane $\pi \subset \mathrm{PG}(5, q)$ depends on the value of $\lvert \pi \cap \pi_{\mathcal{N}} \rvert$.

\begin{Theorem}\cite[Theorem 3.7]{CSRD codes}\label{h1=r2n}
    For each plane $\pi$ in $\PG(5,q)$, $q\geq 4$ even, we have $r_{2,n}(\pi)=h_1(\pi)$. 
\end{Theorem}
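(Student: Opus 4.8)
\textbf{The plan} is to make the set $\cH_1$ completely explicit and then read off both sides of the claimed equality as point-counts of two plane–plane intersections related by projective duality. First I would compute the image of a double line under $\delta$. In even characteristic a double line is a perfect square $\ell^2$ with $\ell=b_0X+b_1Y+b_2Z$, and since every cross term carries a factor $2$ one has $\ell^2=b_0^2X^2+b_1^2Y^2+b_2^2Z^2$; applying $\delta$ from \eqref{eqn:delta} shows that the associated hyperplane is $\cZ(b_0^2Y_0+b_1^2Y_3+b_2^2Y_5)$. Because $x\mapsto x^2$ is a bijection of $\Fq$ for $q$ even, as $(b_0:b_1:b_2)$ ranges over $\PG(2,q)$ the triple $(b_0^2:b_1^2:b_2^2)$ ranges over all of $\PG(2,q)$. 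Hence $\cH_1$ is exactly the set of hyperplanes $\cZ(c_0Y_0+c_3Y_3+c_5Y_5)$ with $(c_0,c_3,c_5)\neq(0,0,0)$, a family naturally parametrised by a plane in the dual space $\PG(5,q)^{\ast}$.

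Next I would reduce both quantities to intersection numbers of planes. Since every point of $\pi_{\mathcal{N}}$ is a rank-$2$ point (the orbit $\cP_{2,n}$), we have $r_{2,n}(\pi)=|\pi\cap\pi_{\mathcal{N}}|$. On the dual side, the hyperplanes through $\pi$ form the annihilator $\pi^{\perp}$, a plane in $\PG(5,q)^{\ast}$, and by the previous paragraph the hyperplanes in $\cH_1$ form the fixed plane $\Pi_1=\{d_1=d_2=d_4=0\}$ of $\PG(5,q)^{\ast}$. Thus $h_1(\pi)=|\pi^{\perp}\cap\Pi_1|$. The key observation is that $\Pi_1=\pi_{\mathcal{N}}^{\perp}$: writing $\pi_{\mathcal{N}}=\langle e_1,e_2,e_4\rangle$ as a vector subspace, its annihilator is precisely $\{d_1=d_2=d_4=0\}$.

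It then remains to compare the two plane–plane intersections. Using the annihilator identity $(\pi+\pi_{\mathcal{N}})^{\perp}=\pi^{\perp}\cap\pi_{\mathcal{N}}^{\perp}$ together with the dimension count $\dim(\pi+\pi_{\mathcal{N}})^{\perp}=6-\dim(\pi+\pi_{\mathcal{N}})=\dim(\pi\cap\pi_{\mathcal{N}})$ (in vector-space dimensions), one gets that $\pi^{\perp}\cap\Pi_1=\pi^{\perp}\cap\pi_{\mathcal{N}}^{\perp}=(\pi+\pi_{\mathcal{N}})^{\perp}$ has the same projective dimension as $\pi\cap\pi_{\mathcal{N}}$. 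Two subspaces of equal projective dimension over $\Fq$ contain the same number of points, so $h_1(\pi)=|\pi^{\perp}\cap\Pi_1|=|\pi\cap\pi_{\mathcal{N}}|=r_{2,n}(\pi)$, which is the assertion; concretely both numbers equal $0,1,q+1$ or $q^2+q+1$ according as the two planes are disjoint, meet in a point, meet in a line, or coincide.

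The substantive step is the explicit identification of $\cH_1$ with the coordinate plane $\Pi_1$ in the dual space and the recognition that this plane is exactly $\pi_{\mathcal{N}}^{\perp}$; both rest on the perfect-square description of double lines in characteristic $2$, which is what replaces the conic-plane/tangent-plane polarity available only for $q$ odd. Once this is in place the result is a formal consequence of projective duality and holds for every even $q$ (the hypothesis $q\geq4$ is harmless here). I would double-check only the degenerate configurations—$\pi\subseteq\pi_{\mathcal{N}}$, $\pi=\pi_{\mathcal{N}}$, and $\pi\cap\pi_{\mathcal{N}}=\emptyset$—to confirm the dimension bookkeeping in the extreme cases, but I anticipate no real obstacle there.
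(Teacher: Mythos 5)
Your argument is correct, and it is worth noting that the paper itself gives no proof of this statement: Theorem \ref{h1=r2n} is imported verbatim from \cite[Theorem 3.7]{CSRD codes}, so your proposal supplies a self-contained justification where the text only has a citation. The two pillars of your argument both check out. First, in characteristic $2$ a quadratic form is a square of a linear form exactly when its cross terms vanish, so under the map \eqref{eqn:delta} the orbit $\cH_1$ is precisely the set of hyperplanes $\cZ(c_0Y_0+c_3Y_3+c_5Y_5)$, $(c_0,c_3,c_5)\in\PG(2,q)$ (the Frobenius bijection $x\mapsto x^2$ guarantees surjectivity onto this dual plane); second, every point of $\pi_{\mathcal{N}}$ lies in $\cP_{2,n}$ (a nonzero symmetric matrix with zero diagonal is alternating, hence of rank $2$, and no Veronese point has $u_0^2=u_1^2=u_2^2=0$), so $r_{2,n}(\pi)=\lvert\pi\cap\pi_{\mathcal{N}}\rvert$. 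The identification of the dual plane parametrising $\cH_1$ with $\pi_{\mathcal{N}}^{\perp}$ and the annihilator computation $\pi^{\perp}\cap\pi_{\mathcal{N}}^{\perp}=(\pi+\pi_{\mathcal{N}})^{\perp}$, of the same dimension as $\pi\cap\pi_{\mathcal{N}}$, then give the equality of point counts, including in the degenerate cases $\pi=\pi_{\mathcal{N}}$ and $\pi\cap\pi_{\mathcal{N}}=\emptyset$. Your observation that the hypothesis $q\geq 4$ plays no role is also accurate for this argument: it goes through for $q=2$ as well, as long as $\cH_1$ and $\cP_{2,n}$ are understood as $K$-orbits rather than orbits of the larger stabiliser $\operatorname{Sym}_7$ of $\cV(\mathbb{F}_2)$, which does not preserve $\pi_{\mathcal{N}}$. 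What your duality argument buys, compared with a case analysis of plane orbits, is that it is uniform in $\pi$ and makes transparent that the common value is $0$, $1$, $q+1$ or $q^2+q+1$ according to the mutual position of $\pi$ and $\pi_{\mathcal{N}}$.
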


\section{Planes intersecting $\cV(\Fq)$ and $\pi_{\mathcal{N}}$}\label{main1}

The $K$-orbits of planes that intersect both the Veronese surface $\cV(\Fq)$ and the nucleus plane $\pi_{\mathcal{N}}$ non-trivially are precisely the orbits $\Sigma_1$, $\Sigma_3$, $\Sigma_4$, $\Sigma_7$, $\Sigma_8$, $\Sigma_9$, $\Sigma_{10}$, $\Sigma_{11}$, and $\Sigma_{15}$, as listed in Table~\ref{tableplanesintersectingpiN}. These orbits were previously classified and characterized in \cite{planesqeven}, where the existence of $15$ $K$-orbits of planes intersecting $\cV(\Fq)$ non-trivially was established; among these, $9$ also have a nonempty intersection with $\pi_{\mathcal{N}}$. For further details on the derivation of these orbits, we refer the reader to \cite[Section 3]{planesqeven}.

\begin{table}[h]
\begin{center}
\footnotesize{

\begin{tabular}[!htbp]{ccc cccc} 
 \toprule
$\pi^K$ & Representatives   & $OD_0(\pi)$ &$\pi^K$ & Representatives   & $OD_0(\pi)$\\ \midrule
\vspace{0.2cm}

$ \Sigma_1$& $\begin{bmatrix} x&y&.\\y&z&.\\.&.&.          \end{bmatrix}$ & $[q+1,1,q^2-1,0]$  &$ \Sigma_{\mathcal{N}}$& $\begin{bmatrix} .&x&y\\x&.&z\\y&z&.          \end{bmatrix}$ & $[0,q^2+q+1,0,0]$  
 \\ \vspace{0.2cm}

$\Sigma_{3}$  &$\begin{bmatrix} x&.&z\\.&y&.\\z&.&.        \end{bmatrix}$ &$[2,1,2q-2,q^2-q]$ &$\Sigma_{16}$  &$\begin{bmatrix}\cdot&x&z\\x&z&y\\ z&y&\cdot\end{bmatrix}$ &$[0,q+1,0,q^2]$ 
 \\\vspace{0.2cm}
 
$\Sigma_{4}$  &$\begin{bmatrix} x&.&z\\.&y&z\\z&z&.        \end{bmatrix}$ &$[2,1,2q-2,q^2-q]$ &  $ \Sigma_{17}$&	$\begin{bmatrix}\cdot&x&y\\x&z&\cdot\\ y&\cdot &z\end{bmatrix}$ & $[0,q+1,q,q^2-q]$
   \\ \vspace{0.2cm}

  $ \Sigma_7$&	$\begin{bmatrix} x&y&z\\y&.&.\\z&.&.          \end{bmatrix}$ & $[1,q+1,q^2-1,0]$& $\Sigma_{18}$ &$\begin{bmatrix}x&y&z\\y&cz&x+z\\ z&x+z&\cdot\end{bmatrix}$ &$[0,1,0,q^2+q]$ 
\\ \vspace{0.2cm}
   
   $ \Sigma_8$&	$\begin{bmatrix} x&y&.\\y&.&z\\.&z&.          \end{bmatrix}$ & $[1,q+1,q-1,q^2-q]$ &$\Sigma_{19}$ & $\begin{bmatrix}x&y&.\\y&y+z&z\\ .&z&x\end{bmatrix}$ &$[0,1,3q,q^2-2q]$
   \\ \vspace{0.2cm}
   
   $ \Sigma_9$&	$\begin{bmatrix} x&y&.\\y&z&z\\.&z&.          \end{bmatrix}$ & $[1,1,2q-1,q^2-q]$&  $ \Sigma_{20}$& $\begin{bmatrix}x&y&bx\\y&cx+y+z&z\\ bx&z&x\end{bmatrix}$	
     & $[0,1,q,q^2]$
   \\ \vspace{0.2cm}

 $\Sigma_{10}$ & $\begin{bmatrix} x&y&.\\y&z&.\\.&.&z          \end{bmatrix}$ &$[1,1,2q-1,q^2-q]$ & $ \Sigma_{21}$&	$\begin{bmatrix} x & x+az& \cdot \\ x+az  &z  & y\\ \cdot & y & \cdot \end{bmatrix}$ & $[0,1,2q,q^2-q]$ 
 \\\vspace{0.2cm}

    $ \Sigma_{11}$& $\begin{bmatrix} x&y&.\\y&z&z\\.&z&x+z         \end{bmatrix}$	
     & $[1,1,q-1,q^2]$&  $ \Sigma_{22}$&	$\begin{bmatrix} x & x+z& z \\ x+z  &z  & y\\ z & y & \cdot \end{bmatrix}$ & $[0,1,q,q^2] $
     \\ \vspace{0.2cm}

    $ \Sigma_{15}$&	$\begin{bmatrix} x&y&z\\y&z&.\\z&.&.         \end{bmatrix}$ & $[1,1,q-1,q^2]$& $ \Sigma_{23}$&	$\begin{bmatrix} x & az& x \\ az  &z  & y\\ x & y & \cdot \end{bmatrix}$ & $[0,1,2q,q^2-q]$
    \\  \bottomrule

   \end{tabular}}

 \caption{\label{tableplanesintersectingpiN}$K$-orbits of planes in $\PG(5,q)$ intersecting  $\pi_{\mathcal{N}}$ in at least one point and their point-orbit distributions for $q> 2$ even. The parameter $c$ in $\Sigma_{18}$ is a non-admissible element in $\Fq$ (see \cite[Section~2.1]{planesqeven}) such that $\operatorname{Tr}(c^{-1})=\operatorname{Tr}(1)$. The parameters in $\Sigma_{20}$ satisfy $b\neq 1$ and $\operatorname{Tr}\left(c/(1 + b^2)\right)=1$. The parameter $a$ in $\Sigma_{21}$ and $\Sigma_{23}$ satisfies $\operatorname{Tr}(a)=1$.}

\end{center}
\end{table}

\section{Planes disjoint from $\cV(\Fq)$ and intersecting  $\pi_{\mathcal{N}}$}\label{main2}

In this section, we classify the $K$-orbits of planes in $\PG(5,q)$ that contain no rank-$1$ points and at least one rank-$2$ point in $\pi_{\mathcal{N}}$. We refer the reader to \cite[Tables~1 and~3]{CSRD codes} for the notation, representation and properties of the $15$ $K$-orbits of lines classified in \cite{lines}.

The following lemma will be used in the proof of the first main result, Theorem \ref{thm:line_in_nucleus_plane}.

\begin{Lemma}\label{lem:$K(o_{12,3})$-action}
Let $\ell$ be a line of type $o_{12,3}$ and $R \in \ell\cap \cP_{2,s}$. The stabiliser $K_\ell$ of $\ell$ in $K$ has three orbits on lines in $\langle \cC(R)\rangle$ through $R$: (i) the tangent line to $\cC(R)$ through $R$, (ii) the secant lines to $\cC(R)$ through $R$, and (iii) the external lines to $\cC(R)$ through $R$.
\end{Lemma}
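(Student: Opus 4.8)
The plan is to reduce the statement to an explicit computation inside the conic plane $\langle\cC(R)\rangle$ and then exploit the characteristic-$2$ geometry of a conic together with its nucleus. First I would fix the representative of the line $\ell$ of type $o_{12,3}$ from \cite{lines} and locate the distinguished point $R\in\ell\cap\cP_{2,s}$, its conic $\cC(R)\subset\cV(\Fq)$, the associated nucleus $N$, and the unique tangent line to $\cC(R)$ through $R$. Reading the point-orbit distribution $OD_0(\ell)$ I would verify that $K_\ell$ fixes $R$ --- most directly because $R$ is the unique point of $\ell$ lying in $\cP_{2,s}$. Since every rank-$2$ point determines its conic plane uniquely and $K$ preserves $\cV(\Fq)$, it follows that $K_\ell$ stabilises $\langle\cC(R)\rangle$, the conic $\cC(R)=\langle\cC(R)\rangle\cap\cV(\Fq)$, its nucleus $N$, and hence the tangent/secant/external trichotomy of the lines of $\langle\cC(R)\rangle$ through $R$. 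The unique tangent line is then forced to be a $K_\ell$-orbit on its own, giving (i), and it remains to prove transitivity on the $q/2$ secants and on the $q/2$ external lines.

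Next I would record the relevant characteristic-$2$ facts. Projecting $\cC(R)$ from $R$ defines the involution $\sigma_R$ on $\cC(R)\cong\PG(1,q)$ sending a point to the second intersection of its join with $R$ with $\cC(R)$; since $q$ is even, $\sigma_R$ has exactly one fixed point, namely the contact point $T$ of the tangent line, and it pairs the remaining $q$ points of $\cC(R)$ into the $q/2$ secants. Any element of $K_\ell$ fixes $R$ and $\cC(R)$, hence commutes with $\sigma_R$, so the group $\overline{K_\ell}$ induced by $K_\ell$ on $\cC(R)$ lies in the centraliser $C_{\PGL(2,q)}(\sigma_R)\cong(\Fq,+)$, a group of order $q$ (the translations fixing $T$). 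The crux of the argument is to show that $K_\ell$ induces the \emph{full} centraliser, i.e. that $\overline{K_\ell}\cong(\Fq,+)$ has order $q$; this I would establish by computing $K_\ell=\{A\in\GL(3,q): M\mapsto AMA^{T}\text{ stabilises }\ell\}$ explicitly from the chosen representative and reading off its restriction to the plane $\langle\cC(R)\rangle$.

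Granting that $\overline{K_\ell}$ is the full translation centraliser, the orbit count follows from a uniform argument. The involution $\sigma_R$ fixes the tangent line and every secant (swapping the two conic points carried on it), hence fixes at least $q/2+1\ge 3$ lines of the pencil through $R$ when $q\ge 4$, and therefore acts trivially on that pencil $\cong\PG(1,q)$. Thus $\overline{K_\ell}$ acts on the pencil through the quotient $\overline{K_\ell}/\langle\sigma_R\rangle$, an elementary-abelian $2$-group of order $q/2$ fixing the tangent line. A nontrivial element of this quotient cannot fix a second line of the pencil, since the stabiliser of two points in $\PGL(2,q)$ is cyclic of odd order $q-1$; hence the quotient consists of translations and acts regularly on the remaining $q$ lines, which split into exactly two orbits of length $q/2$. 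As $\overline{K_\ell}$ preserves $\cC(R)$, no orbit can mix secants with externals, so these two orbits are precisely the $q/2$ secants and the $q/2$ externals, yielding (ii) and (iii). (For $q=2$ each class is a singleton and the statement is immediate.)

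I expect the main obstacle to be the explicit determination of $K_\ell$ and, in particular, the verification that its image on $\cC(R)$ is the full order-$q$ centraliser rather than a proper subgroup. Transitivity on the secants is comparatively soft --- it only needs $\overline{K_\ell}$ to surject onto the order-$(q/2)$ action on secant pairs --- whereas transitivity on the externals is the delicate point: externals are not directly indexed by pairs of conic points, and the argument only closes once one knows the induced group acts without spurious fixed lines, which is exactly the regularity guaranteed by obtaining the full translation centraliser. I would therefore concentrate the computational effort on pinning down $K_\ell$ precisely and certifying that it realises all $q$ translations fixing $T$.
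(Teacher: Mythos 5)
Your plan rests on the assertion that ``$R$ is the unique point of $\ell$ lying in $\cP_{2,s}$'', and this is where it goes wrong: the point-orbit distribution of $o_{12,3}$ is $OD_0(\ell)=[0,1,q,0]$, so the unique distinguished point of $\ell$ is its point in the nucleus plane (the entry $r_{2,n}=1$), while $\ell\cap\cP_{2,s}$ consists of $q$ points, each lying in a \emph{different} conic plane (their conics form a pencil of $q+1$ conics of $\cV(\Fq)$). Consequently $K_\ell$ does not fix $R$ and does not stabilise $\langle\cC(R)\rangle$, so the deduction that the group induced by $K_\ell$ on $\cC(R)$ sits inside the order-$q$ centraliser of $\sigma_R$ is not available as stated (indeed $|K_\ell|=q^3(q-1)$ and $K_\ell$ permutes the $q$ conic planes $\langle\cC(R')\rangle$, $R'\in\ell\cap\cP_{2,s}$). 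The repair is exactly the first half of the paper's proof: show that $K_\ell$ acts transitively on $\ell\cap\cP_{2,s}$, fix $R$, and pass to the stabiliser $K_{\ell,R}$; one should also note that the $K_\ell$-orbits on lines of $\langle\cC(R)\rangle$ through $R$ coincide with the $K_{\ell,R}$-orbits, because an element of $K_\ell$ carrying one such line to another must fix $R=\ell\cap\langle\cC(R)\rangle$ (otherwise the image line would contain two points of $\ell$ and hence equal $\ell$, which is not contained in $\langle\cC(R)\rangle$). With $K_\ell$ replaced by $K_{\ell,R}$ your subsequent reasoning is sound.

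Beyond that, the crux you correctly identify --- that the group induced on $\cC(R)$ is the full order-$q$ centraliser of $\sigma_R$ --- is left as a computation to be done, and it is precisely the computation the paper carries out using the explicit form of $K_\ell$ from the line classification: $K_{\ell,R}$ fixes one point of $\cC(R)$ and acts transitively on the remaining $q$ points, which, combined with your containment in the centraliser, forces the induced group to be the full elementary abelian group of order $q$. Where your route genuinely diverges from the paper is in handling the external lines: the paper passes to the quadratic extension $\PG(5,q^2)$ and uses transitivity on the points of $\cC(R)(q^2)$ not defined over $\Fq$, whereas your centraliser/semiregularity argument (no nontrivial $2$-element of $\PGL(2,q)$, $q$ even, fixes two points of the pencil, so the $q$ non-tangent lines split into two orbits of length $q/2$ which must be the secants and the externals) stays entirely over $\Fq$. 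That is a clean and correct alternative for that step --- one small slip aside, the quotient of order $q/2$ acts semiregularly, not regularly, on the $q$ non-tangent lines --- but the proposal as written is not a complete proof until the misidentification of $R$ is corrected and the stabiliser computation is actually performed.
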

\begin{proof}
Recall that $OD_0(o_{12,3})=[0,1,q,0]$. 
By \cite{lines}, the $K$-orbit $o_{12,3}$ can be represented by the line $\ell$ with
$$
M_\ell=\begin{bmatrix} \cdot & x& \cdot \\x  &x+y & y\\ \cdot & y & \cdot\end{bmatrix}.
$$
The stabiliser $K_\ell$  in $K$ of a line of $\ell$ of type $o_{12,3}$ was determined in \cite{lines}, where it was shown that $K_\ell$ is isomorphic to $E_q^2 :E_q :C_{q-1}$, and a general element of $K_l$ corresponds to the projectivity $\varphi(d_{11},d_{21},d_{22},d_{23},d_{33})\in \PGL(3,q)$ with matrix
\[
D = \begin{bmatrix}
d_{11} & \cdot & d_{22} + d_{33} \\
d_{21} & d_{22} & d_{23} \\
d_{11} + d_{22} & \cdot & d_{33}
\end{bmatrix}
\]
using the notation from \cite[Section 4]{lines}. Let $R_{x,y}$ denote the point with coordinates $(0,x,0,x+y,y,0)$ on $\ell$, and 
let $P=R_{1,1}$ denote the unique point on $\ell$ contained in the nucleus plane. Let $\ell(P)$ denote the line in $\PG(2,q)$ corresponding to the pre-image of the conic $\cC(P)$ under the Veronese map. Similarly, denote by $\ell(R_{x,y})$ the line in $\PG(2,q)$ corresponding to the conic $\cC(R_{x,y})$.
Then $\ell(P)=\cZ(X_0+X_2)$ and $\ell(R_{x,y})=\cZ(yX_0+xX_2)$, and these $q+1$ lines form a pencil of lines in $\PG(2,q)$ with base $(0,1,0)$. Clearly each $\varphi(d_{11},d_{21},d_{22},d_{23},d_{33})$ fixes the line $\ell(P)$ and the point $(0,1,0)$, and since $\varphi(d_{11},d_{21},d_{22},d_{23},d_{33})$ maps the point $(1,0,0)$ to the point $(d_{11},d_{21},d_{11}+d_{22})$, it follows that $K_\ell$ acts transitively on the conics $\cC(R_{x,y})$ with $x\neq y$, and therefore also on $R \in \ell\cap \cP_{2,s}$. Put $R=R_{1,0}$, and consider the stabiliser $K_{\ell,R}$ in $K_\ell$ of $R$. The elements of $K_{\ell,R}$ correspond to the projectivities
$\varphi(1,d_{21},1,d_{23},d_{33})$ with $d_{33}\neq 0$, and one easily verifies that the group $K_{\ell,R}$ fixes the point $(0,0,0,1,0,0)$ and acts transitively on the remaining points of the conic $\cC(R)$. It follows that $K_{\ell,R}$ fixes the tangent line through $R$, and acts transitively on the secant lines in $\langle \cC(R)\rangle$ through $R$. To see that the $K_{\ell,R}$ also acts transitively on the external lines through $R$ in $\langle \cC(R)\rangle$, it suffices to go to the quadratic extension $\PG(5,q^2)$ and observe that the group
$K_{\ell,R}$ acts transitively on the set of points of $\cC(R)(q^2)$ which are not defined over $\bF_q$.
\end{proof}

\begin{Theorem}\label{thm:line_in_nucleus_plane}
There are two $K$-orbits, $\Sigma_{16}$ and $\Sigma_{17}$, of planes $\pi$ in $\PG(5,q)$ which are disjoint from the Veronese surface $\cV(\bF_q)$, and meet the nucleus plane of $\cV(\bF_q)$ in a line. These two $K$-orbits are characterised by their point-orbit distributions $OD_0(\Sigma_{16})=[0,q+1,0,q^2] $ and $OD_0(\Sigma_{17})=[0,q+1,q,q^2-q]$.
\end{Theorem}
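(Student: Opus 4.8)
The goal is to classify planes $\pi$ disjoint from $\cV(\Fq)$ meeting $\pi_{\mathcal{N}}$ in a \emph{line} $\ell$, so $r_1(\pi)=0$ and $\ell=\pi\cap\pi_{\mathcal{N}}$. Since every point of $\ell$ lies in $\pi_{\mathcal{N}}$ and $\pi_{\mathcal{N}}$ consists entirely of rank-$2$ points (the orbit $\cP_{2,n}$), we immediately read off $r_{2,n}(\pi)=q+1$. The strategy is to first pin down the $K$-orbit of the line $\ell$, then classify how the remaining point $\pi\setminus\ell$ can sit relative to $\cV(\Fq)$ and $\pi_{\mathcal{N}}$. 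The plan is to proceed as follows.

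\textbf{Step 1: identify the $K$-orbit of $\ell$.} First I would determine which of the $15$ line-orbits from \cite{lines} consist of lines lying entirely inside $\pi_{\mathcal{N}}$. A line in $\pi_{\mathcal{N}}$ has all its points of rank $2$ and rank-$2$-in-nucleus type, so its point-orbit distribution must be $[0,q+1,0,0]$. I expect exactly one such line-orbit to arise this way, and by Lemma~\ref{lem:$K(o_{12,3})$-action} and the surrounding discussion the natural candidate is the orbit $o_{12,3}$ (whose distribution $[0,1,q,0]$ is \emph{not} contained in $\pi_{\mathcal{N}}$), so more carefully I would look up the unique line-orbit with distribution $[0,q+1,0,0]$ contained in $\pi_{\mathcal{N}}$ and fix a canonical representative $\ell$ for it. By transitivity of $K$ on this line-orbit, I may assume $\pi=\langle\ell,R\rangle$ for a single point $R$ of rank $2$ or rank $3$ (rank $1$ being excluded).

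\textbf{Step 2: reduce to orbits of $R$ under the stabiliser $K_\ell$.} The classification of planes $\pi\supset\ell$ reduces to the orbits of the stabiliser $K_\ell$ on points $R\notin\ell$, modulo the equivalence that $\langle\ell,R\rangle=\langle\ell,R'\rangle$ whenever $R'\in\langle\ell,R\rangle\setminus\ell$. Concretely I would compute $K_\ell$ explicitly — as in Lemma~\ref{lem:$K(o_{12,3})$-action}, the stabiliser of a line in $\pi_{\mathcal{N}}$ is a large solvable subgroup of $K$ with an explicit matrix parametrisation — and then let it act on the quotient space $\PG(5,q)/\ell$, i.e.\ on the pencil of planes through $\ell$ or equivalently on representatives $R$. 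The plan is to produce representatives realising the two claimed distributions, namely one plane $\pi$ meeting $\cV^{(2)}(\Fq)$ only in the $q+1$ nucleus points (giving $r_{2,s}=0$, $r_3=q^2$, matching $\Sigma_{16}$) and one meeting $\cV^{(2)}(\Fq)$ in $q$ further rank-$2$ points off the nucleus (giving $r_{2,s}=q$, $r_3=q^2-q$, matching $\Sigma_{17}$).

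\textbf{Step 3: compute the distributions and prove exactly two orbits.} For each representative I would compute $OD_0(\pi)$ directly: since $r_1=0$ and $r_{2,n}=q+1$ are forced, it remains to count the rank-$2$ points off $\pi_{\mathcal{N}}$, i.e.\ to count $\Fq$-rational singular points of the associated cubic curve $\mathscr{C}(\pi)$ obtained by setting $\det M_\pi=0$; the rank-$2$-versus-rank-$3$ split among the $q^2$ points of $\pi\setminus\ell$ then follows. The value $r_{2,s}\in\{0,q\}$ distinguishes the two orbits, and since $OD_0$ is a $K$-invariant this shows $\Sigma_{16}\neq\Sigma_{17}$. To prove these are \emph{all} the orbits, I would show that every $K_\ell$-orbit on admissible points $R$ collapses, under the $\langle\ell,R\rangle$-equivalence of Step~2, into one of the two constructed planes; the cleanest way is to use Lemma~\ref{lem:$K(o_{12,3})$-action}'s trichotomy (tangent/secant/external lines to $\cC(R)$ through a rank-$2$ point) to control how $\pi$ meets each conic plane $\langle\cC(R)\rangle$ along $\ell$, thereby forcing the off-nucleus rank-$2$ count into exactly the two possibilities.

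\textbf{Main obstacle.} The hard part will be Step~3's exhaustiveness: showing that $K_\ell$ has exactly two relevant orbits on planes through $\ell$ that avoid $\cV(\Fq)$, rather than merely exhibiting two. This requires a careful orbit-counting or normal-form argument for the $K_\ell$-action on the quotient, handling the $\langle\ell,R\rangle$-equivalence (several points $R$ spanning the same plane) and the genuinely characteristic-$2$ phenomena — concurrency of tangents at nuclei and the behaviour of the cubic $\mathscr{C}(\pi)$ — that make the even-$q$ case depart from the odd-$q$ correspondence noted in the introduction.
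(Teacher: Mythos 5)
Your overall strategy (fix the line $\ell=\pi\cap\pi_{\mathcal{N}}$, which lies in the single $K$-orbit $o_{12,1}$ of lines inside the nucleus plane, then classify planes through $\ell$ under $K_\ell$) is a legitimate alternative to the paper's route, and Steps 1--2 are sound in outline. However, the proposal has a genuine gap exactly where you locate the difficulty: Step 3 is not carried out, and the tool you nominate for it does not apply. Lemma \ref{lem:$K(o_{12,3})$-action} concerns the stabiliser of a line of type $o_{12,3}$ (point-orbit distribution $[0,1,q,0]$, meeting $\pi_{\mathcal{N}}$ in a \emph{single} point) and its action on lines through a rank-two point $R\in\cP_{2,s}$ inside the conic plane $\langle\cC(R)\rangle$; it says nothing about the action of the stabiliser of a line of type $o_{12,1}$ on the $q^3+q^2+q+1$ planes through that line, which is what your Step 3 requires. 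In the paper this lemma is used only later, for planes meeting $\pi_{\mathcal{N}}$ in a point (Theorem \ref{thm:different_hyperplanes}). Moreover, you assert without argument that $r_{2,s}(\pi)\in\{0,q\}$; a priori the residual conic of the cubic $\mathscr{C}(\pi)$ (after splitting off the linear factor corresponding to $\ell$) could be nonsingular or a pair of lines, giving other values of $r_{2,s}$, and ruling this out for planes disjoint from $\cV(\Fq)$ is precisely the content that must be proved, not assumed. (A small further inaccuracy: $K_\ell$ for $\ell\subset\pi_{\mathcal{N}}$ is the stabiliser of a point under the dual action of $\PGL(3,q)$, hence contains $\GL(2,q)$-sections and is not solvable in general.)

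For comparison, the paper avoids the $K_\ell$-orbit computation entirely by exploiting Theorem \ref{h1=r2n}: since $r_{2,n}(\pi)=q+1$ forces $\pi$ to lie in $q+1$ hyperplanes of $\cH_1$, one may fix such a hyperplane $H_0$ and use the conic plane $\langle\cC_0\rangle\subset H_0$ and its nucleus $P_0$ as reference. The dichotomy is then geometric: if $\pi$ has no rank-two points off $\pi_{\mathcal{N}}$ it must contain $P_0$ (because $\pi\cap\langle\cC_0\rangle$ is a rank-two point which is forced into $\pi_{\mathcal{N}}$), while if it has such a point $P$ it cannot contain $P_0$ (else the tangent $\langle P,P_0\rangle$ would put a point of $\cC_0$ in $\pi$). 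Each branch is then reduced to an explicit three-parameter family $\pi_{a,b,c}$ and a short transitivity argument (for $\Sigma_{17}$, via the flag stabiliser acting on triples of points of $\cC_0$). If you wish to keep your line-first decomposition, you would need to replace the appeal to Lemma \ref{lem:$K(o_{12,3})$-action} by an explicit determination of the $K_\ell$-orbits on planes through $\ell$ avoiding $\cV(\Fq)$, or at least by an argument in the spirit of the paper's that pins down the possible residual conics of $\mathscr{C}(\pi)$; as it stands the exhaustiveness claim is unproved.
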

\begin{proof}
 If $\pi$ is a plane with $OD_0(\pi)=[0,q+1,0,q^2]$ then $\pi$ is contained in $q+1$ hyperplanes of the $K$-orbit $\cH_1$, namely the hyperplanes containing the solid $\langle \pi,\pi_\cN\rangle$. Without loss of generality, we may assume that $\pi\in H_0=\cZ(Y_0)\in \cH_1$. Let $\cC_0$ denote the conic of $\cV(\bF_q)$ contained in $H_0$. Since the conic plane $\langle \cC_0\rangle$ and the plane $\pi$ meet in a point of rank two, and $\pi$ has no points of rank two outside of the nucleus plane, it follows that $\pi$ meets $\langle \cC_0\rangle$ in the nucleus $P_0(0,0,0,0,1,0)$ of $\cC_0$. Since the stabiliser of $H_0$ in $K$ acts transitively on the lines of $\pi_\cN$ through $P_0$, we may assume 
 that the line $\pi\cap \pi_\cN$ is the line through $P_0$ and the point with coordinates $(0,1,0,0,0,0)$, and that the plane $\pi$ has the form
$$
\pi_{a,b,c}=\langle \ell_{12,1},P(0,0,a,b,0,c)\rangle
$$
represented by the matrix
$$
\pi_{a,b,c}~:~\begin{bmatrix}\cdot&x&az\\x&bz&y\\ az&y&cz\end{bmatrix},
$$
where $\ell_{12,1}$ is the representative of the $K$-orbit $o_{12,1}$ from \cite{lines}.
By computing the cubic curve $\pi\cap \cV^{(2)}(\Fq)$, one can verify that if $c\neq 0$, then $\pi_{a,b,c}$ contains points of rank $\leq 2$ outside the nucleus plane. So we may put $c=0$. If $a=0$, then $\pi_{a,b,c}$ is contained in $\cV^{(2)}(\Fq)$. So we may put $a=1$, since the set of planes $\pi_{a,b,c}$ is parameterized by points $(a,b,c)$ in $\PG(2,q)$. It is straightforward to show that $\pi_{1,b,0}$ is $K$-equivalent to $\pi_{1,1,0}$. This shows that there is a unique $K$-orbit of planes with point orbit distribution $[0,q+1,0,q^2]$. A representative for this orbit is given by
\begin{eqnarray}\label{eqn:Sigma_16}
\Sigma_{16}~:~\begin{bmatrix}\cdot&x&z\\x&z&y\\ z&y&\cdot\end{bmatrix},
\end{eqnarray}
whose associated cubic curve $\Delta(\pi_{1,1,0})$ is a triple line in $o_{12,1}$.

Next, suppose $\pi$ is a plane which intersects the nucleus plane in a line and contains at least one point $P$ of rank two outside the nucleus plane. Since the stabiliser of the nucleus plane acts transitively on the set of hyperplanes in $\cH_1$, we may assume that $P$ is contained in the conic plane $\langle \cC_0\rangle$ in $H_0=\cZ(Y_0)\in \cH_1$. Note that $\pi$ does not contain the nucleus $P_0$ of $\cC_0$, since otherwise $\pi$ would contain the point of the $\cC_0$ on the tangent line $\langle P,P_0\rangle$. Since the stabiliser of $H_0$ in $K$ acts transitively on lines of $\pi_\cN$ not through $P_0$, we assume that $\pi$ contains the points $(0,0,1,0,0,0)$ and $(0,0,0,0,1,0)$. 
Therefore, the plane $\pi$ has the form
$$
\pi_{a,b,c}~:~\begin{bmatrix}\cdot&x &y\\ x &az&bz\\ y&bz&cz\end{bmatrix}.
$$

These planes are $K$-equivalent to $\pi_{1,0,1}$, which has point-orbit distribution $[0,q+1,q,q^2-q]$.
To see this, first observe that the choice of the line $L$ of $\pi\cap \pi_\cN$ not through $P_0$, corresponds to fixing a pencil of lines in the pre-image $\PG(2,q)$ of the Veronese map, consisting of the lines corresponding to the conics on the Veronese variety with nucleus on the line $L$. With the choices that were made above, this corresponds to the pencil of lines in $\PG(2,q)$ with base $(1,0,0)$. So the stabiliser $K_{\mathcal F}$ in $K$ of the flag ${\mathcal F}=(L,H_0)$ is isomorphic to the group of homologies with center $(1,0,0)$ and axis $\cZ(X_0)$. Let $Q$ denote the point of $\cC_0$ on the tangent line $\langle P,P_0\rangle$, and $R$ and $S$ two points on a secant line of $\cC_0$ through $P$. Then $P$ is determined by the triple $(Q,R,S)$ of distinct points on $\cC_0$ and $K_{\mathcal F}$ acts transitively on such triples.

 This shows that there is a unique $K$-orbit of planes with point orbit distribution $[0,q+1,q,q^2-q]$. A representative for this orbit is given by
\begin{eqnarray}\label{eqn:Sigma_18}
\Sigma_{17}~:~\begin{bmatrix}\cdot&x&y\\x&z&\cdot\\ y&\cdot &z\end{bmatrix},
\end{eqnarray}
whose associated cubic curve $\Delta(\pi_{1,0,1})$ is the union of a line in $o_{12,1}$ and a double line in $o_{12,3}$. This completes the proof.
\end{proof}

The next objective is to classify planes which are disjoint from the Veronese surface, and which meet the nucleus plane in a point. The following lemma will be used in the proof of the second main result of this paper, Theorem \ref{thm:point_in_nucleus_plane}.

\begin{Lemma}\label{lem:lines_o_{13,1}}
Let $P$ be a point in the nucleus plane of the Veronese surface $\cV(\bF_q)$, and let $H(P)$ denote the unique hyperplane of $\cH_1$ containing the conic $\cC(P)$. For any $H\in \cH_1\setminus\{H(P)\}$, there are two $K$-orbits of lines of type $o_{13,1}$ through $P$ in $H$ not in $H(P)$ under the stabiliser $K_{P,H}$ of $P$ and $H$ in $K$. 
\end{Lemma}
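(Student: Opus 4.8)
The plan is to reduce to one canonical configuration, compute the stabiliser explicitly, and then classify the relevant lines by a direct rank computation followed by an orbit count.

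First I would exploit transitivity. Since $K$ preserves both $\cV(\bF_q)$ and $\pi_{\mathcal{N}}$, it acts on the set of pairs $(P,H)$ with $P\in\pi_{\mathcal{N}}$ and $H\in\cH_1\setminus\{H(P)\}$. Identifying a point of $\pi_{\mathcal{N}}$ with the line $\ell_P\subset\PG(2,q)$ whose Veronese image is $\cC(P)$, and a hyperplane of $\cH_1$ with the line of $\PG(2,q)$ it doubles, this set corresponds to the ordered pairs of distinct lines of $\PG(2,q)$, on which $\PGL(3,q)\cong K$ is transitive. Hence I may fix one representative, say $P=(0,1,0,0,0,0)$ and $H=\cZ(Y_0)$, so that $H(P)=\cZ(Y_5)$ and $\cC_H:=H\cap\cV(\bF_q)$ is the conic $\nu(\cZ(X))$ lying in the conic plane $\langle\cC_H\rangle=\cZ(Y_0,Y_1,Y_2)$. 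Here $K_{P,H}$ is the stabiliser in $\PGL(3,q)$ of the two distinct lines $\ell_P=\cZ(Z)$ and $\ell_H=\cZ(X)$, namely the projectivities given by
\[
A=\begin{bmatrix} a_{11} & 0 & 0 \\ a_{21} & a_{22} & a_{23} \\ 0 & 0 & a_{33}\end{bmatrix},\qquad a_{11},a_{22},a_{33}\in\bF_q^{*}.
\]
I would record that $K_{P,H}$ stabilises $\cC_H$ and its plane $\langle\cC_H\rangle$, and fixes the common point $\cC(P)\cap\cC_H=\nu(\ell_P\cap\ell_H)$, which lies in $H(P)$.

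Next I would parametrize. A line through $P$ inside $H$ is spanned by $P$ and a direction $Q=(0,0,q_2,q_3,q_4,q_5)$, and the condition that it is not contained in $H(P)$ forces $q_5\neq0$, so I normalise $q_5=1$. Computing the determinant of the generic matrix $sM_P+tM_Q$ gives, in characteristic two,
\[
\det(sM_P+tM_Q)=t\,(s+t\,q_2 q_3^{1/2})^2 ,
\]
so every such line carries exactly two points of rank at most two—namely $P$ (always of rank two and in $\pi_{\mathcal{N}}$) and one further point $R$—together with $q-1$ points of rank three. A short computation of the $2\times2$ minors of $R$ shows that $R$ has rank one precisely when $q_2=0$ and $q_3=q_4^2$ (the lines meeting $\cV(\bF_q)$), and rank two otherwise, in which case $R\notin\pi_{\mathcal{N}}$ and the point-orbit distribution is $[0,1,1,q-1]$. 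I would then invoke the line classification of \cite{lines} to confirm that these rank-two-$R$ lines are exactly those of type $o_{13,1}$.

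Finally I would count orbits. Transporting the $K_{P,H}$-action to the parameters yields $q_2\mapsto\lambda q_2$, $q_3\mapsto\mu^2 q_3+\nu^2$, and $q_4\mapsto\rho q_2+\mu q_4+\nu$ with $\lambda,\mu\in\bF_q^{*}$ and $\rho,\nu\in\bF_q$ arbitrary and independent. Since squaring is a bijection of $\bF_q$, these formulas collapse the $(q_3,q_4)$-data completely and show that the $o_{13,1}$ lines split into exactly two orbits according to whether $q_2=0$ or $q_2\neq0$; geometrically this is the $K_{P,H}$-invariant dichotomy of whether the line meets the conic plane $\langle\cC_H\rangle$ (necessarily in its rank-two point $R$) or is disjoint from it, with representatives $\langle P,(0,0,0,1,0,1)\rangle$ and $\langle P,(0,0,1,0,0,1)\rangle$. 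The main obstacle I anticipate is the identification step rather than the orbit count: both orbits share the distribution $[0,1,1,q-1]$, so I must use a finer genuinely $K$-invariant quantity—the line-orbit distribution of \cite{lines}, or the position of the line relative to the conic plane $\langle\cC(R)\rangle$ of its rank-two point—to verify that both orbits consist of lines of the single global type $o_{13,1}$ while remaining inequivalent under the smaller group $K_{P,H}$, in close analogy with the tangent/secant/external trichotomy of Lemma~\ref{lem:$K(o_{12,3})$-action}.
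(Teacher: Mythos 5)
Your proposal is correct, and the computations check out: the determinant $\det(sM_P+tM_Q)=t(s+tq_2q_3^{1/2})^2$, the rank-one locus $q_2=0,\ q_3=q_4^2$, and the induced action $q_2\mapsto\lambda q_2$, $q_3\mapsto\mu^2q_3+\nu^2$, $q_4\mapsto\rho q_2+\mu q_4+\nu$ are all right, and the invariant $q_3+q_4^2$ (scaling by $\mu^2$) together with the dichotomy $q_2=0$ versus $q_2\neq 0$ does yield exactly two orbits with the stated representatives. The route, however, is genuinely different from the paper's. The paper organises the argument around the mutual position in $\PG(2,q)$ of the three lines $\ell(R)$, $\ell(P)$, $\ell(H)$ attached to the rank-two point $R$, the nucleus point $P$, and the hyperplane $H$: it treats separately the cases where they are not pairwise distinct (forcing $\ell(R)=\ell(H)$), pairwise distinct and concurrent (ruled out by a contradiction involving the conic plane $\langle\cC(R)\rangle$), and pairwise distinct non-concurrent, reducing each surviving case to a canonical matrix. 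Your argument instead parametrises \emph{all} lines through $P$ in $H$ not in $H(P)$ by $(q_2,q_3,q_4)$ and computes the $K_{P,H}$-action on the parameters directly; the paper's impossible ``concurrent'' case simply never materialises as a separate branch, and your invariant $q_2=0$ versus $q_2\neq 0$ (the line meeting $\langle\cC_H\rangle$ or not) is exactly the paper's invariant $\ell(R)=\ell(H)$ versus $\ell(R)\neq\ell(H)$ in coordinates. What your version buys is a self-contained proof that the two classes are each a single orbit \emph{and} mutually inequivalent, which the paper leaves largely implicit in its reduction to canonical forms; what the paper's version buys is brevity and a geometric explanation of why only two configurations survive. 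The one step you should make explicit rather than merely ``invoke'' is the type identification: among the line orbits disjoint from $\cV(\bF_q)$ and meeting $\pi_{\mathcal{N}}$ in a point, only $o_{12,3}$, $o_{13,1}$ and $o_{16,1}$ occur, with point-orbit distributions $[0,1,q,0]$, $[0,1,1,q-1]$ and $[0,1,0,q]$ respectively, so your distribution $[0,1,1,q-1]$ does pin down $o_{13,1}$ for both classes; your worry that a finer invariant might be needed there is unfounded.
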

\begin{proof}
Let $\ell=\langle P, R\rangle$, be a line of type $o_{13,1}$ where $R$ is the unique point of rank two on $\ell\setminus \{P\}$.
Without loss of generality we may assume that $H(P)=\cZ(Y_0)$ and $H=\cZ(Y_5)$.
Let $\ell(R)$ denote the pre-image of the conic $\cC(R)$ under the Veronese map, and likewise denote by $\ell(P)$ and $\ell(H)$, the lines of $\PG(2,q)$ determined by the two hyperplanes $H(P),H\in \cH_1$.
If the three lines $\ell(R)$, $\ell(P)$, and $\ell(H)$ are not pairwise distinct, then $\ell(R)=\ell(H)$, since $R\notin H(P)$, and
the line $\ell$ can be represented by
$$
M_\ell=\begin{bmatrix} x & \beta x& \cdot \\ \beta x  &\gamma x & y\\ \cdot & y & \cdot\end{bmatrix},
$$
for some $\beta,\gamma\in \bF_q$. Under the stabiliser  $K_{P,H}$ the corresponding line $\ell$ is equivalent to the line represented by the matrix
$$
\begin{bmatrix} x & x& \cdot \\ x  &\cdot & y\\ \cdot & y & \cdot\end{bmatrix}. 
$$

\bigskip

If the three lines $\ell(R)$, $\ell(P)$, and $\ell(H)$, are pairwise distinct and concurrent, then by transitivity, we may assume that $\ell(R)$ has equation $\cZ(X_0+X_2)$.
In this case the conic plane $\langle \cC(R)\rangle$ corresponds to the matrix
$$
\begin{bmatrix} y+z & z& y+x \\ z  & x+z& z\\ y+z& z & y+z\end{bmatrix},
$$
and therefore meets both hyperplanes $H(P)$ and $H$ in the same line.
This is a contradiction since $R\in \langle \cC(R)\rangle$, $R\in H$, but $R\notin H(P)$.

\bigskip

If the three lines $\ell(R)$, $\ell(P)$, and $\ell(H)$, are pairwise distinct but not concurrent, then without loss of generality we may assume that $\ell(R)=\cZ(X_1)$, and therefore the plane $\langle \cC(R)\rangle=\cZ(Y_1,Y_3,Y_4)$. Since $R\in H\setminus H(P)$, $R$ has coordinates $(1,0,\beta,0,0,0,)$, for some $\beta \in \bF_q$ and $\ell=\langle P,R\rangle$ has matrix
$$
M_\ell=
\begin{bmatrix} x & \cdot & \beta x \\ \cdot  & \cdot & y\\   \beta x & y & \cdot \end{bmatrix},
$$
and is therefore in the same $K_{P,H}$-orbit as the line represented by the matrix 
$$
\begin{bmatrix} x & \cdot& x \\ \cdot  &\cdot  & y\\ x & y & \cdot \end{bmatrix}.
$$
This completes the proof.
\end{proof}

Let $P$ be a point in the nucleus plane $\pi_\cN$ of $\cV(\bF_q)$. As before, let $H(P)$ denote the unique hyperplane in $\cH_1$ containing $\cC(P)$. If $\pi$ is a plane disjoint from the Veronese variety, which meets the nucleus plane in the point $P$, then by Theorem \ref{h1=r2n}, there is a unique hyperplane in $\cH_1$ containing $\pi$. Denote this hyperplane by $H(\pi)$.

\begin{Theorem}\label{thm:same_hyperplanes}
If $\pi$ is a plane in $\PG(5,q)$ which is disjoint from the Veronese surface $\cV(\bF_q)$, and meets the nucleus plane of $\cV(\bF_q)$ in a point $P$, with $H(P)=H(\pi)$, then $\pi$ belongs to one of three $K$-orbits $\Sigma_{18}$, $\Sigma_{19}$, and $\Sigma_{20}$, with
$OD_0(\Sigma_{18})=[0,1,0,q^2+q]$, $OD_0(\Sigma_{19})=[0,1,3q,q^2-2q]$, and $OD_0(\Sigma_{20})=[0,1,q,q^2]$.
\end{Theorem}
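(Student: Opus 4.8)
The plan is to reduce the whole classification to that of square-free binary cubic forms over $\bF_q$ under $\PGL(2,q)$, by first showing that the associated cubic curve $\mathscr{C}(\pi)$ is forced to be a cone with vertex $P$.

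\textbf{Normalisation.} Since $K$ acts transitively on the $q^2+q+1$ points of $\pi_\cN$ (equivalently on $\cH_1$), I would fix $P$ to be the nucleus of the conic $\cC_0=\nu(\cZ(X_0))$, so that $P=(0,0,0,0,1,0)$, $H(P)=\cZ(Y_0)=:H_0$, and $\langle\cC_0\rangle=\langle e_3,e_4,e_5\rangle$. By Theorem \ref{h1=r2n} (here $q\geq 4$) we have $h_1(\pi)=r_{2,n}(\pi)=1$, so $\pi$ lies in the single hyperplane $H(\pi)\in\cH_1$ through it; the hypothesis $H(\pi)=H(P)$ then gives $\pi\subset H_0$. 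Two incidences are now recorded: $\pi$ meets $\pi_\cN$ only in $P$ (hypothesis), and $\pi$ meets the conic plane $\langle\cC_0\rangle$ only in $P$, because each of the $q+1$ lines of $\langle\cC_0\rangle$ through the nucleus $P$ carries a point of $\cC_0$, which would be a rank-one point of $\pi$, contradicting $\pi\cap\cV(\bF_q)=\emptyset$.

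\textbf{Parametrisation and the cone.} Inside $H_0$ the planes $\pi_\cN=\langle e_1,e_2,e_4\rangle$ and $\langle\cC_0\rangle=\langle e_3,e_4,e_5\rangle$ meet exactly in $P=e_4$ and span $H_0$. The two incidences say $\pi$ surjects onto $\langle e_1,e_2\rangle$ modulo $\langle\cC_0\rangle$ with kernel $\langle P\rangle$, so I may write $\pi=\langle P,Q_1,Q_2\rangle$ with $Q_1=e_1+a_1e_3+b_1e_5$ and $Q_2=e_2+a_2e_3+b_2e_5$, giving
\[
M_\pi=\begin{bmatrix}\cdot & u & v\\ u & a_1u+a_2v & w\\ v & w & b_1u+b_2v\end{bmatrix},
\]
where $(u,v,w)$ are coordinates on $\pi$ with $P=(0,0,1)$, and where $\pi\cap\pi_\cN=\{P\}$ forces $a_1b_2\neq a_2b_1$. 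The decisive computation is
\[
\det M_\pi=b_1u^3+b_2u^2v+a_1uv^2+a_2v^3=:F(u,v),
\]
the $w$-terms cancelling in characteristic $2$. Hence $\mathscr{C}(\pi)$ is the cone of lines through $P$ whose directions are the roots of the binary cubic $F$, and since $\pi$ has no rank-one points every point of $\mathscr{C}(\pi)$ other than $P$ is a rank-two point off $\pi_\cN$. A short characteristic-$2$ argument (using $u^2+\alpha v^2=(u+\sqrt{\alpha}\,v)^2$) shows $F$ is square-free precisely when $a_1b_2\neq a_2b_1$. Square-free binary cubics over $\bF_q$ come in exactly three factorisation types — three distinct rational lines, one rational line times an irreducible quadratic, and an irreducible cubic — giving $3q+1$, $q+1$, and $1$ rational points on $\mathscr{C}(\pi)$, hence $r_{2,s}=3q,\,q,\,0$, matching $\Sigma_{19}$, $\Sigma_{20}$, and $\Sigma_{18}$.

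\textbf{Transitivity and representatives.} To conclude I would show that the stabiliser $K_{P,H_0}$, which corresponds to the parabolic subgroup of $\PGL(3,q)$ fixing the line $\ell(P)=\cZ(X_0)$, induces via $M\mapsto AMA^T$ the full group $\PGL(2,q)$ on the pencil of lines of $\pi$ through $P$, and therefore on the form $F$. As $\PGL(2,q)$ is transitive on square-free binary cubics of each fixed factorisation type, there is exactly one $K$-orbit per type, and the three orbits are separated by their point-orbit distributions, giving Theorem \ref{thm:same_hyperplanes}. Choosing $F$ of each type produces the representatives; the conditions on $c$ and $b$ recorded in Table \ref{tableplanesintersectingpiN} are exactly what forces the quadratic, respectively cubic, factor to be irreducible over $\bF_q$.

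\textbf{Main obstacle.} I expect the delicate step to be the transitivity claim: checking that the congruence action of the parabolic $K_{P,H_0}$ genuinely realises all of $\PGL(2,q)$ on $F$, so that the irreducible-cubic case does not secretly split into several $K$-orbits through some hidden invariant, and so that each of the other two types is a single orbit. Alongside this sits the bookkeeping verifying the dictionary ``meets $\pi_\cN$ only in $P$ $\Leftrightarrow$ $F$ square-free $\Leftrightarrow$ disjoint from $\cV(\bF_q)$'' and that no rank-one points are introduced, where the characteristic-$2$ fact that a sum of two squares is again a square is essential.
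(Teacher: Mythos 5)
Your proposal is correct, and it takes a genuinely different route from the paper. The paper decomposes $\pi=\langle P,\ell\rangle$ for a line $\ell$ of $\pi$ not through $P$, uses the classification of $K$-orbits of lines to show that $\ell$ must lie in $o_{14,1}\cup o_{15,1}\cup o_{17}$ (after ruling out $o_{10}$, $o_{13,3}$ and $o_{16,3}$ by a tangent-line argument), and then obtains one orbit per line type: for $o_{14,1}$ and $o_{15,1}$ by comparing $|K_P|/|K_\ell|$ with the number of such lines in $H(P)$, and for $o_{17}$ by citing an earlier result. You instead coordinatise all admissible planes at once inside $H_0=H(P)$, using transversality of $\pi$ to both $\pi_\cN$ and the conic plane $\langle \cC_0\rangle$ (the latter holds because in characteristic two every line of a conic plane through the nucleus is a tangent line, so a line of intersection would force a rank-one point), and you exploit the characteristic-two cancellation that makes $\det M_\pi$ a binary cubic $F(u,v)$ independent of $w$; the classification then reduces to the factorisation types of square-free binary cubics. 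The one step you defer --- that $K_{P,H_0}$ induces the full substitution-plus-scaling action on $F$ --- does go through: writing $A$ with first row $(a,0,0)$ and lower-right $2\times 2$ block $C$, the congruence action sends $(u,v)\mapsto a\,(u,v)\,C^T$ and $F\mapsto \det(A)^2\, F\circ (aC)^{-1}$, and since squaring is a bijection of $\Fq^{\ast}$ the scalar $\det(A)^2=a^2\det(C)^2$ can be prescribed independently of the substitution; transitivity of $\PGL(2,q)$ on unordered triples of distinct points of $\PG(1,\overline{\bF}_q)$ of each rationality type then yields exactly one orbit per type, and since $F\mapsto (a_1,a_2,b_1,b_2)$ is a bijection this transfers to planes. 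Two small points are worth writing out in full: the equivalence of $a_1b_2\neq a_2b_1$ with square-freeness of $F$ is immediate from the characteristic-two discriminant $(a_1b_2+a_2b_1)^2$; and the absence of rank-one points on every plane of your normal form follows from the $2\times 2$ minors $u^2$ and $v^2$, which close up your dictionary. Your approach buys a uniform, essentially self-contained treatment of all three orbits and makes the trichotomy transparent, at the cost of an explicit stabiliser computation; the paper's proof is shorter but leans on the line-orbit classification, the stabiliser orders and the line counts per hyperplane established in earlier work.
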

\begin{proof}
Let $\pi$ be a plane in $\PG(5, q)$ with
$OD_0(\pi) = [0, 1, r_{2s} > 1, r_3]$,
meeting $\pi_\cN$ in the point $P$, such that $H(P)=H(\pi)$.
Note that, by~\cite{webs,lines}, if $\ell$ is a line such that $P \notin \ell$ and $\pi = \langle P, \ell \rangle$, then $\ell$ must lie in $o_{10} \cup o_{13,3} \cup o_{14,1} \cup o_{15,1} \cup o_{16,3} \cup o_{17}$, as these are the only line types that are contained in a unique hyperplane in $\cH_1$, and are disjoint from $\cV(\Fq) \cup \pi_{\mathcal{N}}$.\\
Since $H(P)=H(\pi)$, $\ell \in H(P)$, and therefore $\ell \in o_{14,1}\cup o_{15,1}\cup o_{17}$. This follows from the fact that the unique hyperplane in $\cH_1$ containing a line $\ell \in o_{10} \cup o_{13,3} \cup o_{16,3}$ intersects $\cV(\Fq)$ in  $\mathcal{C}(Q)$, where $Q \in \ell\cap \mathcal{P}_{2s}$, and this would imply that the line $\langle P,Q\rangle$ is a tangent line to the conic $\cC(Q)=\cC(P)$, a contradiction. \\
Let $K_P$ and $K_{\ell}$ denote the stabilisers of $P$ and $\ell$ in $K$, respectively. Note that $K_P=K_{H(P)}$.  Since a line $\ell \in o_{14,1}\cup o_{15,1}\cup o_{17}$ is contained in a unique hyperplane in $\cH_1$ (by \cite[Lemma 4.26]{webs}), it follows that $K_{\ell}\subset K_P$ for each line $\ell$ in $o_{14,1}\cup o_{15,1}\cup o_{17}$ contained in $H(P)$.\\

Assume first that $\ell=\ell_{17} \in o_{17}$. By \cite[Theorem 3.9]{CSRD codes}, this returns us to the unique orbit $\Sigma_{18}$ with point-orbit distribution $[0,1,0,q^2+q]$. Let $P$ be the point parametrized by $(x,y,z)=(0,1,0)$. A representative of $\Sigma_{18}$ can be obtained by considering $\langle P,\ell_{17}\rangle \subset H(P)=\mathcal{Z}(Y_5)$.  Let $c\in \Fq$ be a non-admissible element (see \cite[Section~2.1]{planesqeven}) such that $\operatorname{Tr}(c^{-1})=\operatorname{Tr}(1)$, and consider $\ell_{17}=\langle (1,0,0,0,1,0),(0,0,1,c,1,0) \rangle \in o_{17}$ (see Section \ref{pre}). Then,  $\ell_{17}\subset H(P)$, and  a representative of  $\Sigma_{18}$ is given by 
    
\begin{eqnarray}\label{eqn:Sigma_17}
\Sigma_{18}~:~ \begin{bmatrix}x&y&z\\y&cz&x+z\\ z&x+z&\cdot\end{bmatrix},
\end{eqnarray}

and its associated cubic curve is a point  defined by $\mathcal{C}_{18}=\mathcal{Z}(X^3 + XZ^2 + cZ^3)$.\\

Assume next that $\ell=\ell_{14,1} \in o_{14,1}$.  By \cite{lines}, $|K_{\ell_{14,1}}|=6$. Therefore the orbit of $\ell_{14,1}$ under $K_P$ has size
$$
\frac{|K_P|}{|K_{\ell_{14,1}}|}=\frac{q^2|\GL(2,q)|}{6}=\frac{1}{6}q^3(q-1)(q^2-1).
$$
By \cite[Theorem 4.29]{webs} this is equal to the total number of lines in $o_{14,1}$ contained in $H(P)$. Hence, $K_P$ acts transitively on the lines in $H(P)$ contained in $o_{14,1}$, and thus there exists a unique such $K$-orbit in $\PG(5,q)$. We call this orbit $\Sigma_{19}$. A representative of $\Sigma_{19}$ can be obtained by considering $\langle P, \ell_{14,1}\rangle$ where $P=(0,1,0,0,1,0)\in \pi_{\mathcal{N}}$ and $\ell_{14,1}=\langle (1,0,0,0,0,1),(0,1,0,1,0,0)\rangle$. In this case, $H(P)=\mathcal{Z}(Y_0+Y_5)$ and a representative of $\Sigma_{19}$ is given by 

 \begin{eqnarray}\label{eqn:Sigma_19}
 \Sigma_{19}~:~ \begin{bmatrix}x&y&.\\y&y+z&z\\ .&z&x\end{bmatrix}.
 \end{eqnarray}
 
The cubic curve associated with $\Sigma_{19}$ is $\mathcal{C}_{19} = \mathcal{Z}(X(Y+Z)(X+Y+Z))$, which defines three lines in $o_{12,3}$ passing through the point $P$. Consequently, the point-orbit distribution of $\pi \in \Sigma_{19}$ is given by $OD_{0}(\pi) = [0,1,3q,q^2 - 2q]$. The other $q - 2$ lines in $\pi$ through $P$ are of type $o_{16,1}$, while the remaining $q^2$ lines in $\pi$ are of type $o_{14,1}$. \\

Finally, let $\ell=\ell_{15,1} \in o_{15,1}$. By \cite{lines}, $|K_{\ell_{15,1}}|=2$. Therefore the orbit of $\ell_{15,1}$ under $K_P$ has size
$$
\frac{|K_P|}{|K_{\ell_{15,1}}|}=\frac{q^2|\GL(2,q)|}{6}=\frac{1}{2}q^3(q-1)(q^2-1).
$$
By \cite[Theorem 4.29]{webs} this is equal to the total number of lines in $o_{15,1}$ contained in $H(P)$.
Hence $K_P$ acts transitively on the lines in $H(P)$ which belong to $o_{15,1}$, and thus there exists a unique such $K$-orbit in $\PG(5,q)$. This defines a new orbit $\Sigma_{20}$. A representative can be obtained by considering $\langle P, \ell_{15,1}\rangle$ where $P=(0,1,0,0,1,0)\in \pi_{\mathcal{N}}$ and $\ell_{15,1}=\langle (1,0,b,c,0,1),(0,1,0,1,0,0)\rangle$; $b\neq 1$ and $\operatorname{Tr}\left(c/(1 + b^2)\right)=1$. In this case, $H(P)=\mathcal{Z}(Y_0+Y_5)$ and $\Sigma_{20}$ can be represented by 
 \begin{eqnarray}\label{eqn:Sigma_20}
\Sigma_{20}:  \begin{bmatrix}x&y&bx\\y&cx+y+z&z\\ bx&z&x\end{bmatrix}. 
\end{eqnarray}
 The cubic curve associated with $\Sigma_{20}$ is $\mathcal{C}_{20} = \mathcal{Z}(X(c(1+b^2)X^2+(1+b^2)X(Y+Z)+(Y+Z)^2))$, which defines a line in $o_{12,3}$ and a pair of imaginary lines  meeting at $P$. Consequently, the point-orbit distribution of $\pi \in \Sigma_{20}$ is given by $OD_{0}(\pi) = [0,1,q,q^2]$. 
\end{proof}

\begin{Theorem}\label{thm:different_hyperplanes}
If $\pi$ is a plane in $\PG(5,q)$ which is disjoint from the Veronese surface $\cV(\bF_q)$, and meets the nucleus plane of $\cV(\bF_q)$ in a point $P$, with $H(P)\neq H(\pi)$, then $\pi$ belongs to one of three $K$-orbits $\Sigma_{21}$, $\Sigma_{22}$, and $\Sigma_{23}$, with $OD_0(\Sigma_{21})=[0,1,2q,q^2-q]$, $OD_0(\Sigma_{22})=[0,1,q,q^2]$ and $OD_0(\Sigma_{23})=[0,1,2q,q^2-q]$.

\end{Theorem}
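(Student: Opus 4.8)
The plan is to mirror the architecture of Theorem~\ref{thm:same_hyperplanes}, but now exploiting the condition $H(P)\neq H(\pi)$, which is the essential structural difference. As in the previous proof, write $\pi=\langle P,\ell\rangle$ where $\ell$ is a line disjoint from $\cV(\bF_q)\cup\pi_\cN$ lying in the unique hyperplane $H(\pi)\in\cH_1$ (the hyperplane guaranteed by Theorem~\ref{h1=r2n}). First I would determine which $K$-orbits of lines $\ell$ can occur. Since $\pi$ meets $\cV^{(2)}(\Fq)$ only in $P$ together with the rank-two points of $\ell$, and since $\ell$ must be contained in a single hyperplane of $\cH_1$ but disjoint from both $\cV(\Fq)$ and $\pi_\cN$, the candidate line types are again drawn from $o_{10}\cup o_{13,3}\cup o_{14,1}\cup o_{15,1}\cup o_{16,3}\cup o_{17}$. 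The new constraint $H(P)\neq H(\pi)$ forces $P\notin H(\pi)$, so the tangency obstruction used before is reversed: the lines $\ell$ whose associated conic $\cC(Q)$ equals $\cC(P)$ are now the admissible ones, and I expect the relevant types to be exactly $o_{10}$, $o_{13,3}$, and $o_{16,3}$, each giving rise to one of $\Sigma_{21},\Sigma_{22},\Sigma_{23}$.

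For each surviving line type I would carry out the same transitivity-by-counting argument as in Theorem~\ref{thm:same_hyperplanes}. Concretely, I would fix $P=(0,1,0,0,1,0)$ and a hyperplane $H(P)$, take the stabiliser $K_P=K_{H(P)}$ (of order $q^2|\GL(2,q)|$), and for each admissible $\ell$ compute $|K_\ell|$ from the line classification in \cite{lines}. Dividing gives the size of the $K_P$-orbit of $\ell$; comparing this with the total count of lines of that type contained in the relevant hyperplane $H(\pi)$ and meeting the configuration correctly (via \cite[Theorem~4.29]{webs}) should show that $K_P$ acts transitively on them, yielding a single $K$-orbit $\Sigma_j$ in each case. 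I would then exhibit the explicit representatives from Table~\ref{tableplanesintersectingpiN}, namely
$$
\Sigma_{21}:\begin{bmatrix} x & x+az& \cdot \\ x+az & z & y\\ \cdot & y & \cdot\end{bmatrix},\quad
\Sigma_{22}:\begin{bmatrix} x & x+z& z \\ x+z & z & y\\ z & y & \cdot\end{bmatrix},\quad
\Sigma_{23}:\begin{bmatrix} x & az& x \\ az & z & y\\ x & y & \cdot\end{bmatrix},
$$
with $\operatorname{Tr}(a)=1$ in $\Sigma_{21}$ and $\Sigma_{23}$.

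To read off the point-orbit distributions I would compute the associated cubic curve $\Delta(\pi)=\det M_\pi$ in each case and factor it: the number of rank-two points outside $\pi_\cN$ is controlled by how this cubic splits into lines of type $o_{12,1}$, $o_{12,3}$, or imaginary pairs, exactly as in the previous theorem. The decompositions should yield $r_{2,s}=2q$ for $\Sigma_{21}$ and $\Sigma_{23}$ (two lines contributing rank-two points, consistent with $OD_0=[0,1,2q,q^2-q]$) and $r_{2,s}=q$ for $\Sigma_{22}$ (one real line plus an imaginary pair, giving $[0,1,q,q^2]$); the single rank-two point in $\pi_\cN$ is $P$ itself in all three cases, and $r_1=0$ by hypothesis. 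Here the crucial subtlety, and the main obstacle, is distinguishing $\Sigma_{21}$ from $\Sigma_{23}$: they share the identical point-orbit distribution $[0,1,2q,q^2-q]$, so the distribution alone cannot separate them. I would resolve this using Lemma~\ref{lem:lines_o_{13,1}}, which provides precisely two $K_{P,H}$-orbits of lines of type $o_{13,1}$ through $P$ in a hyperplane $H\neq H(P)$; these two orbits should correspond to the two distinct configurations underlying $\Sigma_{21}$ and $\Sigma_{23}$ (the two ways the lines $\ell(R),\ell(P),\ell(H)$ can be positioned), and a finer invariant—such as the line-orbit distribution of $\pi$ or the $K$-orbit of the rank-two points of $\ell$ relative to $P$—would confirm that the two planes are genuinely inequivalent. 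Verifying this inequivalence, rather than the transitivity counts, is where the real work lies.
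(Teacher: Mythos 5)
There is a genuine gap, and it sits exactly where you place the ``real work.'' First, the decomposition $\pi=\langle P,\ell\rangle$ with $\ell$ a line of $\pi$ \emph{not} through $P$ does not set up a usable correspondence between line types and plane orbits in this case. In Theorem \ref{thm:same_hyperplanes} that worked because every such $\ell$ lies in $H(P)$, forcing $K_\ell\subset K_P=K_{H(P)}$, and all lines of a given admissible type in $H(P)$ form a single $K_P$-orbit, every one of which spans a valid plane with $P$. Here the relevant group is the smaller stabiliser of the pair $(P,H(\pi))$ with $H(P)\neq H(\pi)$, not every line of an admissible type in $H(\pi)$ spans a Veronese-free plane with $P$, and --- decisively --- a single plane of the kind being classified contains lines of \emph{several} of the six types not through $P$ (for instance a plane of $\Sigma_{21}$ contains one $o_{10}$ line together with lines of other types, and planes of $\Sigma_{22}$ and $\Sigma_{23}$ contain lines of types $o_{13,3}$, $o_{16,3}$ and $o_{17}$). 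So the expected trichotomy ``$o_{10},o_{13,3},o_{16,3}$ give $\Sigma_{21},\Sigma_{22},\Sigma_{23}$ respectively'' is false, and the transitivity-by-counting argument does not transfer. The paper avoids this by choosing $\ell$ to be a line of $\pi$ \emph{through} $P$ not contained in $H(P)$: it shows such a line must be of type $o_{13,1}$ (ruling out $o_{12,3}$ and $o_{16,1}$ because those lie in $H(P)$), and only then invokes Lemma \ref{lem:lines_o_{13,1}} to split into two explicit one-parameter families $\sigma_{a,b,1}$ and $\tau_{a,b,1}$.

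Second, your proposed way of separating $\Sigma_{21}$ from $\Sigma_{23}$ --- matching them to the two $K_{P,H}$-orbits of $o_{13,1}$ lines from Lemma \ref{lem:lines_o_{13,1}} --- rests on a false premise. In the paper each of those two line-orbits produces a family of planes that splits further according to a parameter $b$, and the orbit $\Sigma_{22}$ arises from \emph{both} families: the planes $\tau_{a,b,1}$ with $b\neq 0$ are shown to be $K$-equivalent to $\sigma_{1,1,1}$ precisely because they contain an $o_{13,1}$ line of the other kind through $P$. So the dichotomy of lines does not descend to a dichotomy of planes, and $\Sigma_{21}$ and $\Sigma_{23}$ come from the $b=0$ subcases of the two families, not from the two line-orbits per se. The invariant that actually separates them is the structure of the cubic $\pi\cap\cV^{(2)}(\Fq)$: a line together with a double line for $\Sigma_{21}$ versus a nonsingular conic with its tangent line for $\Sigma_{23}$ (Remark \ref{mainremark}); your fallback suggestion of the line-orbit distribution would also work, but it is not established by anything in your outline. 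A smaller factual slip: the cubic of $\Sigma_{22}$ is irreducible (a singular cubic with a double point at $R$), not a real line plus an imaginary pair --- the latter describes $\Sigma_{20}$.
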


\begin{proof}
The fact that there is one $K$-orbit of planes $\pi$ with point-orbit distribution $OD_0(\pi)=[0,1,0,q^2+q] $ was proved in \cite{CSRD codes}, where it was also shown that such a plane has hyperplane-orbit distribution $[1,0,0,q^2+q]$. This corresponds to the $K$-orbit $\Sigma_{18}$, in which case $H(P)=H(\pi)$, see Theorem \ref{thm:same_hyperplanes}.
Hence, since $H(P)\neq H(\pi)$, the plane $\pi$ contains at least one point $R$ of rank 2 outside $\pi_{\mathcal{N}}$. 

Let $\ell$ be a line of $\pi$ through $P$, not contained in $H(P)$, and $Q\in H(P)$ be such that $\pi=\langle \ell, Q\rangle$.

Since $\ell$ is a line disjoint from the Veronese surface which meets the nucleus plane in a point, by \cite{lines}, $\ell$ must belong to one of the $K$-orbits $o_{12,3}$, $o_{13,1}$ or $o_{16,1}$. 

By \cite{lines}, the $K$-orbit $o_{12,3}$ can be represented by the line $\ell$ with
$$
M_\ell=\begin{bmatrix} \cdot & x& \cdot \\x  &x+y & y\\ \cdot & y & \cdot\end{bmatrix}.
$$
This line $\ell$ intersects the nucleus plane in the point $P$ with coordinates $(0,1,0,0,1,0)$. The corresponding conic plane $\rho$ containing $\cC(P)$ has matrix
$$
M_\rho=\begin{bmatrix} x & y& x \\y  &z & y\\ x & y & x\end{bmatrix},
$$
and therefore the hyperplane $H(P)=\cZ(Y_0+Y_5)$. It follows that $\ell$ is contained in $H(P)$, a contradiction.

Similarly, one easily verifies that if $\ell\in o_{16,1}$, and $P=\ell \cap \pi_\cN$, then $\ell \subseteq H(P)$, again contradicting the hypothesis. To see this, it suffices to consider the representative for $\ell$ with matrix
$$
M_\ell=\begin{bmatrix} \cdot & \cdot& x \\ \cdot  &x & y\\ x & y & \cdot\end{bmatrix},
$$
from \cite{lines}.

It follows that $\ell$ must belong to the $K$-orbit $o_{13,1}$, which has $OD_0(\ell)=[0,1,1,q-1]$.

\bigskip

Since $P$ is uniquely determined by the conic $\cC(P)$ and $H(\pi)$ is uniquely determined by the conic $\cC(\pi)$, the fact that $\PGL(3,q)$ acts transitively on pairs of lines in $\PG(2,q)$ implies that without loss of generality we may assume that $P$ has coordinates $(0,0,0,0,1,0)$, $H(P)=\cZ(Y_0)$ and $H(\pi)=\cZ(Y_5)$.

With the notation as above, we may assume that $R$ is the unique point of rank two such that $\ell=\langle P,R\rangle$. Consider the three conic planes determined by $\cC(P)$, $\cC(\pi)$ and $\cC(R)$. By construction $\cC(P)\neq \cC(\pi)$ and $\cC(P)\neq \cC(R)$. By the above choice of coordinates the point $\cC(P)\cap \cC(\pi)$ has coordinates $(0,0,0,1,0,0)$.

By Lemma \ref{lem:lines_o_{13,1}}, there are two orbits of lines $\ell=\langle P, R\rangle$, where $R$ is the unique point of rank two on $\ell\setminus \{P\}$, of type $o_{13,1}$ through $P$ in $H(\pi)$ not in $H(P)$ under the stabiliser $K_P$ of $P$, and they can be represented by (see proof of Lemma \ref{lem:lines_o_{13,1}})
$$
\begin{bmatrix} x & x& \cdot \\x  &\cdot  & y\\ \cdot & y & \cdot \end{bmatrix} \mbox{,  and }
\begin{bmatrix} x & \cdot& x \\ \cdot  &\cdot  & y\\ x & y & \cdot \end{bmatrix}.
$$
The first has $H(R)=H(\pi)$ and the second has $H(R)\neq H(\pi)$, where $R$ is the point corresponding to the matrix obtained by setting $(x,y)=(1,0)$.

\bigskip

Choosing a point $Q$ in $H(P)\cap H(\pi)$ such that $\pi=\langle \ell, Q\rangle$, we obtain the following two sets, say $\mathcal S$ and $\mathcal T$, of planes represented by
$$
\sigma_{a,b,c}~:~\begin{bmatrix} x & x+az& bz \\ x+az  &cz  & y\\ bz & y & \cdot \end{bmatrix}
$$
and
$$
\tau_{a,b,c}~:~\begin{bmatrix} x & az& x+bz \\ az  &cz  & y\\ x+bz & y & \cdot \end{bmatrix},
$$
respectively, where each of the sets $\mathcal S$ and $\mathcal T$ consists of $q^2+q+1$ planes parametrised by $(a,b,c) \in \PG(2,q)$.
Note that $c$ must be nonzero, since we assume $\pi$ meets the nucleus plane in the point $P$. Putting $c=1$, we are left with $q^2$ planes in each of the sets $\mathcal S$ and $\mathcal T$.

\bigskip

\underline{The set $\mathcal S$.}
The cubic ${\mathcal{K}}(\sigma_{a,b,1})$ obtained as the intersection $\sigma_{a,b,1} \cap \cV^{(2)}(\bF_q)$ is the zero locus of the form
$$
bZ((X+aZ)Y+bZ^2)+Y(XY+(X+aZ)bZ)=b^2Z^3+XY^2.
$$
For $b=0$ the cubic ${\mathcal{K}}(\sigma_{a,0,1})$ is the union of two lines, one of which is double, and $OD_0(\sigma_{a,0,1})=[0,1,2q,q^2-q]$. The plane $\sigma_{a,0,1}$ meets the conic plane $\langle \cC(\sigma_{a,0,1})\rangle$ in a line $m$ with matrix
$$
M_m=\begin{bmatrix} x & x+az& \cdot \\ x+az  &z  & \cdot\\ \cdot & \cdot & \cdot \end{bmatrix}
$$
which must be external to $\cC(\sigma_{a,0,1})$ ($m\in o_{10}$), implying that $\operatorname{Tr}(a)=1$. Note that the line in $\sigma_{a,0,1}$ containing the points parametrised by $(x,y,z)=(0,y,z)$ is a line through $P$ belonging to $o_{12,3}$.
To prove that each two planes $\sigma_{a,0,1}$ with $\operatorname{Tr}(a)=1$ belong to the same $K$-orbit it suffices to apply Lemma \ref{lem:$K(o_{12,3})$-action}.
We denote this orbit by $\Sigma_{21}$, and it can be represented by
$$
\Sigma_{21}~:~\begin{bmatrix} x & x+az& \cdot \\ x+az  &z  & y\\ \cdot & y & \cdot \end{bmatrix};
$$
$\operatorname{Tr}(a)=1$.\\

If $b\neq 0$ then the cubic ${\mathcal{K}}(\sigma_{a,b,1})$  is irreducible and consists of the points parameterised by $(x,y,z)$ belonging to the set
$$\{(1,0,0),(0,1,0)\} \cup \{(b^2 y^2,y,1): y \in \bF_q\setminus \{0\}\},$$
and $OD_0(\sigma_{a,b,1})=[0,1,q,q^2]$. The plane $\sigma_{a,b,1}$ meets the conic plane $\langle \cC(\pi)\rangle$ in the point $R$, and $R$ is a double point of ${\mathcal{K}}(\sigma_{a,b,1})$.

First, observe that the planes $\sigma_{a,b,1}$ and $\sigma_{1,b,1}$ are $K$-equivalent. This leaves us with $q-1$ planes $\sigma_{1,b,1}$ with $b\in \bF_q\setminus \{0\}$. The plane $\sigma_{1,b,1}$ is spanned by the points $P$, $R$, and $Q_{b,y}$ of ${\mathcal{K}}(\sigma_{1,b,1})$, where $Q_{b,y}$ is parametrised by $(x,y,z)=(b^2/y^2,y,1)$, $y\in \bF_q\setminus \{0\}$. The conic $\cC(Q_{b,y})$ is the image of the line $\cZ(yX_0+bX_1)$ under the Veronese map, and meets the conic $\cC(R)$ in the point with coordinates $(b^2,by,0,y^2,0,0)$. Clearly the points $Q_{1,1}$ and $Q_{b,b}$ belong to the same conic plane $\langle \cC(Q_{1,1})\rangle = \langle \cC(Q_{b,b})\rangle$.

Inside $K_P$, fixing the point $R$ fixes the point $\cC(R)\cap \cC(P)$ which has coordinates $(0,0,0,1,0,0)$, the nucleus of $\cC(R)$ with coordinates $(0,1,0,0,0,0)$, the intersection of $\cC(R)$ with the tangent line of $\cC(R)$ through $R$, which has coordinates $(1,0,0,0,0,0)$, and the intersection, with coordinates $(1,1,0,1,0,0)$, of $\cC(R)$ with the line through $R$ and the point $\cC(R)\cap \cC(P)$. This implies that the stabiliser $K_{P,R}$ of $R$ 
inside $K_P$ fixes $\cC(R)$ pointwise.

To show that the planes $\sigma_{1,b,1}$, $b\neq 0$ form one $K$-orbit, it suffices to show that $K_{P,R}$ acts transitively on the set of points $\{Q_{b,b}~:~b\in \bF_q\setminus\{0\}\}$ in the conic plane $\langle \cC(Q_{1,1})\rangle$. To see this, observe that the element $\varphi_b \in K_{P,R}$ corresponding to the homology in $\PGL(3,q)$ with center $(0,0,1)$ and axis $\cZ(X_2)$, which maps $(x_0,x_1,x_2)$ to $(x_0,x_1,bx_2)$, maps $Q_{1,1}$ with coordinates $(1,0,1,1,1,0)$ to $Q_{b,b}$ with coordinates $(1,0,b,1,b,0)$. Therefore $\varphi_b(\sigma_{1,1,1})=\sigma_{1,b,1}$. This orbit, denoted $\Sigma_{22}$, is represented by
$$
\Sigma_{22}~:~\begin{bmatrix} x & x+z& z \\ x+z  &z  & y\\ z & y & \cdot \end{bmatrix}.
$$

\bigskip

We have thus proved that the set $\mathcal S$ contains exactly two $K$-orbits of planes meeting the nucleus plane in the point $P$, namely $\Sigma_{21}$ and $\Sigma_{22}$.

\bigskip

\underline{The set $\mathcal T$.}

Planes in $\mathcal T$ intersect the secant variety of $\cV(\bF_q)$ in the cubic ${\mathcal{K}}(\tau_{a,b,1})$, which is the zero locus of the form
$$
(X+bZ)(aYZ+(X+bZ)Z)+Y(XY+aZ(X+bZ))=X^2Z+b^2Z^3+XY^2.
$$
If $b=0$ then the cubic ${\mathcal{K}}(\tau_{a,b,1})$ is the union of a conic $\cZ(XZ+Y^2)$ and its tangent line $\cZ(X)$ at the point parametrised by $(x,y,z)=(0,0,1)$, and $OD_0(\tau_{a,0,1})=[0,1,2q,q^2-q]$. The line $\cZ(Y)$ is a line of type $o_{12,3}$ and each two planes $\tau_{a,0,1}$ with $\operatorname{Tr}(a)=1$ belong to the same $K$-orbit by Lemma \ref{lem:$K(o_{12,3})$-action}.

If $b\neq 0$ then the cubic ${\mathcal{K}}(\tau_{a,b,1})$ is irreducible and consists of points parameterised by $(x,y,z)$ belonging to the set
$$\{(0,1,0)\} \cup \{(1,(z+b^2z^3)^{1/2},z)~:~z\in \bF_q\},$$
and $OD_0(\tau_{a,b,1})=[0,1,q,q^2]$. The plane $\tau_{a,b,1}$ meets the conic plane $\langle \cC(\pi)\rangle$ in the point 
$(b,a,0,1,0,0)$, and so necessarily $b\neq a^2$.
Let $R'$ denote the unique double point of ${\mathcal{K}}(\tau_{a,b,1})$ parameterised by $(x,y,z)=(b,0,1)$. Then the line $\ell'=\langle P,R'\rangle$ is of type $o_{13,1}$, and $H(R')=H(\pi)=\cZ(Y_5)$. By the above this means that the plane $\tau_{a,b,1}$, $b\neq 0$, is  equivalent to the plane $\sigma_{1,1,1}\in {\mathcal{S}}$.
This defines a unique orbit $\Sigma_{23}$ represented by $$
\Sigma_{23}~:~\begin{bmatrix} x & az& x \\ az  &z  & y\\ x & y & \cdot \end{bmatrix}, 
$$ 
$\operatorname{Tr}(a)=1$, which completes the proof.
\end{proof}

\begin{Remark}\label{mainremark}
Planes in $\Sigma_{16}\dots\Sigma_{23}$ can be distinguished by their point-orbit distributions, except for those in $\Sigma_{20}\cup\Sigma_{22}$ and $\Sigma_{21}\cup\Sigma_{23}$. In these cases, the geometry of the associated cubic curves provides a complete invariant. Specifically, $\mathcal{C}_{20}$ is the union of a line and an imaginary pair of lines, $\mathcal{C}_{22}$ is irreducible, $\mathcal{C}_{21}$ is the union of a line and a double line, and $\mathcal{C}_{23}$ is the union of a non-singular conic with its tangent line.
  










\end{Remark}

\begin{Theorem}\label{thm:point_in_nucleus_plane}
There are six $K$-orbits on planes $\pi$ in $\PG(5,q)$ which are disjoint from the Veronese surface $\cV(\bF_q)$, and meet the nucleus plane of $\cV(\bF_q)$ in a point: (a) one $K$-orbit of planes with point-orbit distributions $OD_0(\pi)=[0,1,0,q^2+q] $, (b) one $K$-orbit with $OD_0(\pi)=[0,1,3q,q^2-2q]$, (c) two $K$-orbits with $OD_0(\pi)=[0,1,q,q^2] $, and (d) two $K$-orbits with $OD_0(\pi)=[0,1,2q,q^2-q]$.
\end{Theorem}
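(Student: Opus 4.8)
The plan is to reduce the statement to the two preceding theorems by splitting according to whether the hyperplanes $H(P)$ and $H(\pi)$ coincide. First I would observe that a plane $\pi$ disjoint from $\cV(\bF_q)$ meeting $\pi_\cN$ in a single point $P$ contains exactly one rank-$2$ point of the nucleus plane, namely $P$, so $r_{2,n}(\pi)=1$. By Theorem \ref{h1=r2n} (for $q\geq 4$ even) this forces $h_1(\pi)=1$, so $\pi$ is contained in a unique hyperplane $H(\pi)\in\cH_1$. Simultaneously the point $P\in\pi_\cN$ determines the unique hyperplane $H(P)\in\cH_1$ through the conic $\cC(P)$. Since $K$ preserves $\cH_1$ and both assignments $P\mapsto H(P)$ and $\pi\mapsto H(\pi)$ are $K$-equivariant, the condition $H(P)=H(\pi)$ is $K$-invariant. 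This yields a complete, $K$-invariant dichotomy on the planes under consideration.

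Next I would invoke the two preceding classifications. For the case $H(P)=H(\pi)$, Theorem \ref{thm:same_hyperplanes} shows that $\pi$ lies in exactly one of the three orbits $\Sigma_{18}$, $\Sigma_{19}$, $\Sigma_{20}$, with distributions $[0,1,0,q^2+q]$, $[0,1,3q,q^2-2q]$, and $[0,1,q,q^2]$. For the case $H(P)\neq H(\pi)$, Theorem \ref{thm:different_hyperplanes} shows that $\pi$ lies in exactly one of the three orbits $\Sigma_{21}$, $\Sigma_{22}$, $\Sigma_{23}$, with distributions $[0,1,2q,q^2-q]$, $[0,1,q,q^2]$, and $[0,1,2q,q^2-q]$. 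Because the dichotomy is exhaustive and $K$-invariant, every plane of the stated form belongs to one of these six orbits, and planes arising in the two different regimes cannot be $K$-equivalent; hence the six orbits are pairwise distinct. Collecting them by point-orbit distribution gives the four cases: case (a) is $\Sigma_{18}$, case (b) is $\Sigma_{19}$, case (c) is $\{\Sigma_{20},\Sigma_{22}\}$, and case (d) is $\{\Sigma_{21},\Sigma_{23}\}$, which proves the count of $1+1+2+2=6$.

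The only genuinely delicate point is the multiplicity two in cases (c) and (d), where two orbits share the same point-orbit distribution. In case (c) the separation is automatic, since $\Sigma_{20}$ occurs in the regime $H(P)=H(\pi)$ while $\Sigma_{22}$ occurs in the regime $H(P)\neq H(\pi)$, and these are distinguished by the $K$-invariant above. The main obstacle is case (d): both $\Sigma_{21}$ and $\Sigma_{23}$ lie in the regime $H(P)\neq H(\pi)$ and share the distribution $[0,1,2q,q^2-q]$, so the point-orbit distribution is not a complete invariant here. To confirm that they are distinct I would appeal to the finer invariant recorded in Remark \ref{mainremark}, namely the projective type of the associated cubic curve: $\mathcal{C}_{21}$ is the union of a line and a double line, whereas $\mathcal{C}_{23}$ is the union of a non-singular conic with its tangent line. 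As these cubic configurations are not projectively equivalent and the cubic type is a $K$-invariant of $\pi$, the orbits $\Sigma_{21}$ and $\Sigma_{23}$ are genuinely distinct, completing the tally of six $K$-orbits.
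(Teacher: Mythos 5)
Your proposal is correct and follows essentially the same route as the paper, whose own proof simply declares the result immediate from Theorem \ref{thm:same_hyperplanes}, Theorem \ref{thm:different_hyperplanes} and Remark \ref{mainremark}; you have merely made explicit the supporting details (that $r_{2,n}(\pi)=1$ forces a unique $H(\pi)\in\cH_1$ via Theorem \ref{h1=r2n}, that the dichotomy $H(P)=H(\pi)$ versus $H(P)\neq H(\pi)$ is $K$-invariant and exhaustive, and that the cubic-curve types separate $\Sigma_{21}$ from $\Sigma_{23}$). No gaps.
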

\begin{proof} 
The result is immediate from Theorem \ref{thm:same_hyperplanes}, Theorem \ref{thm:different_hyperplanes} and Remark \ref{mainremark}.
\end{proof}

The next theorem gives the classification of planes which are disjoint from the Veronese surface but are not disjoint from the nucleus plane.

\begin{Theorem}
    Let $\pi$ be a plane in $\PG(5,q)$ such that $\pi\cap\cV(\Fq)=\emptyset$ and $\pi\cap \pi_{\mathcal{N}}\neq \emptyset$. Then, one of the following cases holds:
    \begin{enumerate}[(i)]
        \item $\pi=\pi_{\mathcal{N}}$,
        \item $\pi\cap\pi_{\mathcal{N}}=\ell\in o_{12,1}$. In this case, $\pi$ has either $0$ or $q$ rank-$2$ points outside the nucleus plane. This defines two $K$-orbits of planes with point-orbit distributions $[0,q+1,0,q^2] $ and $[0,q+1,q,q^2-q]$,
        \item $\pi\cap\pi_{\mathcal{N}}=P$. In this case, $\pi$ has either $0$, $q$, $2q$ or $3q$ rank-$2$ points outside the nucleus plane. This defines (a) one $K$-orbit of planes with point-orbit distributions $OD_0(\pi)=[0,1,0,q^2+q] $, (b) one $K$-orbit with $OD_0(\pi)=[0,1,3q,q^2-2q]$, (c) two $K$-orbits with $OD_0(\pi)=[0,1,q,q^2] $, and (d) two $K$-orbits with $OD_0(\pi)=[0,1,2q,q^2-q] $.
    \end{enumerate}
\end{Theorem}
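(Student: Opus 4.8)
The plan is to reduce everything to a dimension count on the intersection $\pi\cap\pi_{\mathcal{N}}$ and then to invoke the classification theorems already established in this section. Since $\pi$ and $\pi_{\mathcal{N}}$ are both planes of $\PG(5,q)$ and the hypothesis guarantees $\pi\cap\pi_{\mathcal{N}}\neq\emptyset$, the intersection is a nonempty subspace of the plane $\pi_{\mathcal{N}}$ and therefore has projective dimension $0$, $1$, or $2$. These three possibilities are exactly cases (iii), (ii), and (i) of the statement, so the whole argument is organised as this trichotomy.

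For the dimension-$2$ case I would argue that $\pi\cap\pi_{\mathcal{N}}=\pi_{\mathcal{N}}$ forces $\pi=\pi_{\mathcal{N}}$, giving (i). Here I would also record why the hypothesis $\pi\cap\cV(\Fq)=\emptyset$ is automatic: every point of $\pi_{\mathcal{N}}=\cZ(Y_0,Y_3,Y_5)$ is represented by a symmetric matrix with zero diagonal, i.e.\ an alternating form, and a nonzero alternating $3\times 3$ matrix over a field of characteristic $2$ has rank exactly $2$. Hence all $q^2+q+1$ points of $\pi_{\mathcal{N}}$ lie in $\cP_{2,n}$, so $\pi_{\mathcal{N}}$ is disjoint from $\cV(\Fq)$ and $OD_0(\pi_{\mathcal{N}})=[0,q^2+q+1,0,0]$.

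For the remaining two cases the content is already in hand. In the dimension-$1$ case, $\ell:=\pi\cap\pi_{\mathcal{N}}$ is a line contained in $\pi_{\mathcal{N}}$, and I would first identify its $K$-orbit. Because $K$ acts on the nucleus plane via $M\mapsto AMA^T$ restricted to alternating matrices, inducing the full $\PGL(3,q)$ on $\pi_{\mathcal{N}}\cong\PG(2,q)$, this action is transitive on lines of $\pi_{\mathcal{N}}$; since the representative of $o_{12,1}$ used in Theorem \ref{thm:line_in_nucleus_plane} is itself a line of $\pi_{\mathcal{N}}$, it follows that every line in $\pi_{\mathcal{N}}$ is of type $o_{12,1}$. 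Theorem \ref{thm:line_in_nucleus_plane} then applies verbatim and yields exactly the two $K$-orbits $\Sigma_{16}$ and $\Sigma_{17}$, with point-orbit distributions $[0,q+1,0,q^2]$ and $[0,q+1,q,q^2-q]$, which is (ii). In the dimension-$0$ case $\pi\cap\pi_{\mathcal{N}}$ is a point and Theorem \ref{thm:point_in_nucleus_plane} delivers the six $K$-orbits and their distributions directly, giving (iii).

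Since Theorems \ref{thm:line_in_nucleus_plane} and \ref{thm:point_in_nucleus_plane} already carry the hard analysis, I expect this final statement to be essentially a packaging result with no serious obstacle. The one place that needs genuine care, rather than being automatic, is the dimension-$1$ identification: I must justify that every line lying in the nucleus plane is of type $o_{12,1}$, which rests on verifying from the formula $M\mapsto AMA^T$ that the induced $K$-action on $\pi_{\mathcal{N}}$ is transitive on lines. Everything else is bookkeeping of dimensions and citation of the orbit counts already established.
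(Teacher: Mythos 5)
Your proposal is correct and follows essentially the same route as the paper, which simply disposes of case (i) as obvious and cites Theorems \ref{thm:line_in_nucleus_plane} and \ref{thm:point_in_nucleus_plane} for cases (ii) and (iii). The extra details you supply (the rank-two nature of nonzero alternating matrices in characteristic $2$, and the transitivity of $K$ on lines of $\pi_{\mathcal{N}}$ placing them all in $o_{12,1}$) are accurate and consistent with the paper's Table \ref{tableplanesintersectingpiN} and its use of $\ell_{12,1}$ as a representative.
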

\begin{proof} 
    Case $(i)$ is obvious. 
    Case $(ii)$ is Theorem \ref{thm:line_in_nucleus_plane} and case $(iii)$ is Theorem \ref{thm:point_in_nucleus_plane}.
\end{proof}

\begin{Remark}
There are $18$ $\alpha(\PGL(3, 2))$-orbits of planes in $\PG(5,2)$ intersecting $\pi_{\mathcal{N}}$ non-trivially. The full automorphism group of $\cV(\mathbb{F}_2)$ is $\operatorname{Sym}_7$, which  does not preserve the rank distribution (see \cite[Remark 3.30]{CSRD codes}). Under $\operatorname{Sym}_7$, there are $9$ orbits of planes in $\PG(5,2)$. These numbers can be verified computationally using the FinInG package \cite{fining} in GAP \cite{GAP}.
\end{Remark}

\section{Comparison with previous work}\label{comparison}

In \cite{campbell2}, Campbell identified several non-equivalent nets of conics in $\PG(2,q)$ for $q$ even, often without proof. Moreover, the work does not provide a complete classification of the equivalence classes. Some shortcomings in Campbell’s treatment have been pointed out in \cite{zanella,planesqeven}. For instance, Zanella, in \cite{zanella}, constructed nets containing $q^2 + q + 1$ non-singular conics for all $q$, thereby disproving a claim in \cite{campbell2} that such nets exist only when $q \equiv 1 \pmod{3}$. Additionally, some arguments in \cite{campbell2} (see, for example, \cite[pp.~482]{campbell2}) rely on Campbell's earlier assertion in \cite{campbell} that pencils with $q$ non-singular conics and a unique pair of conjugate imaginary lines do not exist. However, as we have demonstrated in \cite[Section~7]{solidsqeven}, this assertion is incorrect. In \cite{campbell2} Campbell listed $17$ classes of nets with at least one double line. Our work completes the classification of nets containing at least one double line in $\PG(2,q)$, by (i) identifying the previously unlisted $\PGL(3,q)$-orbit of nets of conics, represented by
\[
\alpha (cX_0X_2 + X_1^2) + \beta (X_0^2 + X_0X_2 + X_1X_2) + \gamma X_2^2,
\]
which corresponds to the $K$-orbit of  planes $\Sigma_{18}$, where $c$ is a non-admissible element of $\Fq$ satisfying $\operatorname{Tr}(c^{-1}) = \operatorname{Tr}(1)$, (ii) establishing the existence of these orbits, and (iii) providing a complete set of geometric-combinatorial invariants for each of these orbits.

\section*{Acknowledgements}

The first author acknowledges the support of the \textit{Croatian Science Foundation}, project number HRZZ-UIP-2020-02-5713. The second author is supported in part by the Slovenian Research and Innovation Agency (ARIS) under research program P1-0285 and research project J1-50000.

\end{document}